\newtheorem{theorem}{Theorem}[section]
\newtheorem{corollary}{Corollary}[theorem]
\newtheorem*{remark}{Remark}
\begin{document}
%
\title{Optimal multi-period dispatch of distributed energy resources in unbalanced distribution feeders}
%
%
%

\author{Nawaf~Nazir,~\IEEEmembership{Student~Member,~IEEE,}
        ~Pavan~Racherla,~
        and~Mads~Almassalkhi,~\IEEEmembership{Senior Member,~IEEE}
\thanks{Authors are with the Department
of Electrical and Biomedical Engineering, University of Vermont, Burlington,
VT, 05405 USA e-mail: mnazir@uvm.edu
}
\thanks{This work  was  supported  by  the  U.S.  Department  of  Energy’s  Office  of Energy Efficiency and Renewable Energy (EERE) award DE-EE0008006.}}

%
%



\maketitle

\begin{abstract}
This paper presents an efficient algorithm for the multi-period optimal dispatch of deterministic inverter-interfaced energy storage in an unbalanced distribution feeder with significant solar PV penetration. The three-phase, non-convex loss-minimization problem is formulated as a convex second-order cone program (SOCP) for the dispatch of batteries in a receding-horizon fashion in order to counter against the variable, renewable generation. The solution of the SOCP is used to initialize a nonlinear program (NLP) in order to ensure a physically realizable solution. The phenomenon of simultaneous charging and discharging of batteries is rigorously analyzed and conditions are derived that guarantee it is avoided. Simulation scenarios are implemented with GridLab-D for the IEEE-13 and IEEE-123 node test feeders and illustrate not only AC feasibility of the solution, but also near-optimal performance and solve-times within a minute. 
\end{abstract}

\begin{IEEEkeywords}
Energy storage, unbalanced distribution feeders, loss minimization, convex optimization.
\end{IEEEkeywords}

%
\IEEEpeerreviewmaketitle

\section{Introduction}\label{introduction}

The rapid growth in distributed solar PV generation over the past decade has prompted significant interests and investments in demonstration of substation automation technology, distributed energy resources or DERs, such as energy storage and smart inverters, and autonomous demand response~\cite{singer2010enabling,ackermann2017paving}.  
To maintain grid operating conditions under significant renewable (and intermittent) generation, the utility grid operators can leverage the power and energy flexibility inherent to many DERs. However, unlike traditional generation, DERs, such as batteries are energy constrained, which give rise to the need for multi-period decision-making and predictive optimization. Innovative energy service providers, such as ConEdison of NY are moving towards so-called distributed system platforms (DSPs), which represent innovative business models for holistic management of DERs~\cite{conED}. That is, concepts such as DSPs allow for distributed, layered-decomposition optimization schemes that can aggregate and dispatch DERs as virtual batteries (VBs) in a bottom-up fashion to provide energy services at different spatio-temporal scales~\cite{kristov2016tale}. 

The optimal power flow (OPF) is a useful tool to coordinate the grid resources subject to the nonlinear power flow equations and network constraints~\cite{carpentier1962contribution}. 
For constant power loads, the AC power flow equations relate the voltages in the network with the power injections. It has been shown in~\cite{wang2018chordal} that the solution space of the three-phase OPF is non-convex and the solution space of the OPF problem and its convex hull are different. \added{However, many works in literature such as~\cite{karagiannopoulos2018centralised} have shown the importance of considering the  imbalances in a distribution network for accurate analysis.} The full ACOPF model represents an NP-hard, non-convex problem.  Recently, there have been efforts to use convex optimization techniques to solve the OPF problem~\cite{lavaei2012zero,lavaei2014geometry}. Previous works in literature have shown that for certain (e.g. radial) network toplogies, the convex relaxations, such as second order cone programs (SOCP) and semi-definite programs (SDP) can be exact~\cite{jabr2006radial,farivar2011inverter}.
 Traditionally, \textit{DistFlow} algorithms based on \textit{branch flow power models}, \added{which is an exact nonlinear formulation of the distribution network power flow equations,} are used to solve the OPF problem in distribution networks~\cite{baran1989optimal}. However, these methods only consider balanced single phase equivalent models. Distribution networks are inherently unbalanced which makes it important to study the full three-phase models of these networks~\cite{katiraei2015no}.

  Linear approximate models can also be powerful when they are sufficiently accurate. One particular approximation is an extension of the \textit{DistFlow} model~\cite{baran1989optimal} to unbalanced power flows, \textit{LinDist3Flow},\added{ which is a linear formulation of the branch flow applied to 3-phase distribution networks.} This model is obtained by linearization and assumptions of fixed per-phase imbalances~\cite{sankur2016linearized,robbins2016optimal}. Linear models, even though simple and computationally efficient, do not provide guarantees on optimality and feasibility, which are important for scheduling/dispatch problems, such as battery dispatch.

 In~\cite{dall2013distributed,gan2014convex} the authors use SDP rank constraint relaxation to the three phase model of a distribution network, whereas in~\cite{gayme2013optimal,gopalakrishnan2013global}, the authors have used multi-period SDP relaxation techniques to solve this problem for transmission networks, however, SDP solvers are still not numerically robust~\cite{roux2016validating}. 
 In~\cite{marley2017solving}, the authors have utilized a single phase OPF AC-QP algorithm that is initialized with an SOCP relaxation. However, this multi-period OPF formulation neglects the non-unity charge and discharge efficiency of the battery, which can create solutions to the OPF problem that are physically unrealizable due to simultaneous charging and discharging of batteries.
 
In~\cite{nazir2018receding}, we developed a three-phase convex SOCP relaxation of the multi-period OPF problem. We also provided sufficient conditions to avoid simultaneous charging and discharging of batteries in distribution networks with non-unity charging and discharging efficiencies. In this paper, we extend the work in~\cite{nazir2018receding} by developing a multi-period SOCP-NLP algorithm that provides a near optimal but guaranteed feasible solution. We also extend and generalize the analysis on simultaneous charging and discharging to different objective functions and provide comprehensive simulation results on 100+ node feeder system to illustrate computational effectiveness of the proposed optimization algorithms. The optimized solutions obtained from the relaxed SOCP model, are used to initialize a nonlinear program (NLP) of the actual AC power flow to obtain a physically realizable solution.
Furthermore, the real-power solutions that form the energy trajectory and are obtained from the SOCP are fixed in the NLP, leading to a decoupling of the different time-steps. As a result, the NLP solves each time-step separately (and possibly in parallel), leading to a scalable framework. Validation is performed with GridLab-D~\cite{chassin2008gridlab}. 

Thus,  main contributions of this paper are as follows:
\begin{enumerate}
  \item A novel approach to obtain a near-optimal feasible solution by temporal de-coupling of the NLP initialized with the solution from a multi-period three-phase SOCP convex relaxation is presented. By fixing the real-power solutions, the time-steps of the NLP are decoupled leading to a scalable framework
        \item Rigorous analysis is performed on the convex formulation and general conditions are derived that guarantee that the phenomenon of simultaneous charging and discharging of batteries is avoided for different types of network objectives.
    
\end{enumerate}

Section~\ref{conv_form} develops the three-phase OPF problem and the convex formulation for the dispatch of batteries to minimize the network line losses. The role of the objective function on the conditions for which simultaneous charging and discharging of batteries is avoided are analyzed in Section~\ref{comp_const}. Section~\ref{SOCP_NLP} guarantees a physically realizable battery multi-period dispatch by coupling the relaxed SOCP with the exact NLP formulation. Simulation-based analysis and validation results are discussed in Section~\ref{sim_results} for the IEEE-123 node system and GridLab-D. Conclusions and future research directions are presented in Section~\ref{conclusion}.

\section{Convex formulation of Multi-period 3-phase OPF}\label{conv_form}
The aim of this section is to develop a convex formulation of the multi-period optimal power flow in three-phase distribution networks \added{that can be used for the dispatch of DERs in the network. Figure~\ref{fig_stl} illustrates the types of DERs available to the optimizer at each node and the corresponding notation}.
A common objective in distribution networks is to minimize the real power losses, while keeping the system within its operational grid constraints~\cite{atwa2010optimal}. This program optimizes the batteries (i.e., the real and reactive powers) in the network, whose architecture is shown in figure \ref{fig_stl}, over the minute-to-minute time-scale. Such fast solution times for large networks requires formulation that can be solved in polynomial time. Thus, we focus on the following convex formulation.
\begin{figure}[!ht]
\centering
\includegraphics[width=2.8in]{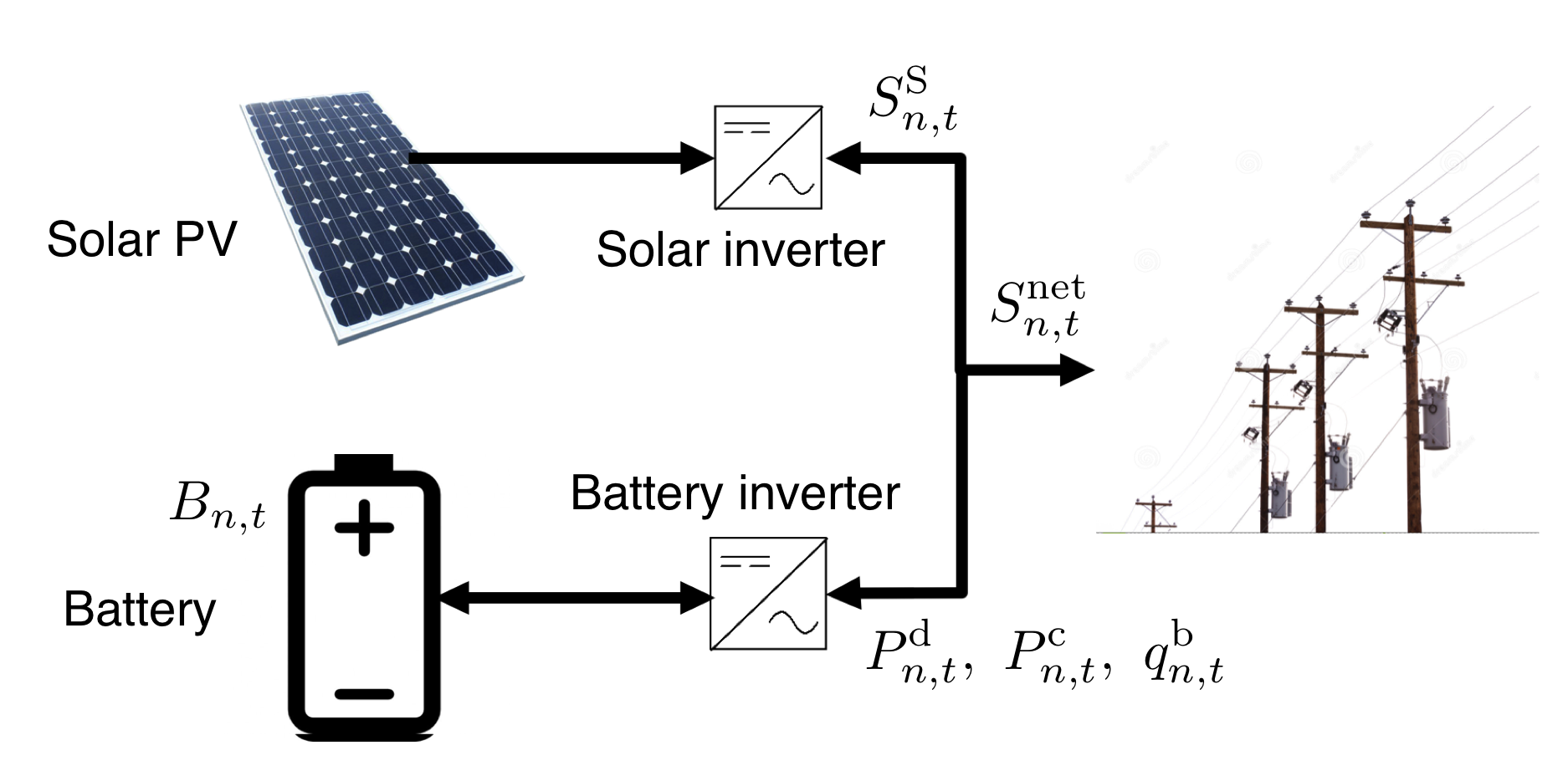}
\caption{Distributed storage architecture. The batteries are controlled through a four quadrant control scheme and can supply and consume both real and reactive power. Each distributed storage is composed of a renewable source of energy such as solar power and a battery bank, each with its own inverter.}
\label{fig_stl}
\end{figure}

A three-phase second order cone program (SOCP) is developed to solve the multi-period optimization problem. A branch flow model (BFM) is used to represent the power flow equations of the three-phase network. 

\subsection{Mathematical and modeling notation}\label{math_notation}
Consider a radial distribution network with $n$ nodes, where $\mathcal{N} = \{1, 2, . . . , n\}$ is the set of all nodes, $\phi = \{a, b, c\}$ is the set of phases at each node, $\mathcal{L}=\{1, 2, . . . , l\}= \{(m,n)\}\subset (\mathcal{N}\times \mathcal{N})$ is the set of all branches, $\mathcal{G}=\{1,2,. . . ,g\}$ is the set of all nodes with DERs and $\mathcal{T}=\{0, . . . , T-1\}$ be the prediction horizon. Let vector $V_{n,t} \in \mathbb{C}^{|\phi|}$ be the voltage at node $n$ and time $t$, with $W_{n,t}=V_{n,t}V_{n,t}^*$, $i_{l,t} \in \mathbb{C}^{|\phi|}$ be the current in branch $l$ at time $t$, with $I_{l,t}=i_{l,t}i_{l,t}^*$, $S_{l,t}=V_{n,t}i_{l,t}^*$ be the apparent power in branch $l$ at time $t$ and $Z_l=R_l+jX_l \in \mathbb{C}^{|\phi| \times |\phi|}$ be the impedance of branch $l$. Let $S^{\text{net}}_{n,t} \in \mathbb{C}^{|\phi|}$ be the apparent power injection , $S^{\text{L}}_{n,t} \in \mathcal{C}^{|\phi|}$ be the apparent load, $S^{\text{S}}_{n,t} \in \mathcal{C}^{|\phi|}$ be the apparent power from solar PV, $P^{\text{c}}_{n,t} \in \mathcal{R}^{|\phi|}$ and $P^{\text{d}}_{n,t} \in \mathcal{R}^{|\phi|}$ be the charge and discharge power of battery, $q^{\text{b}}_{n,t}\in \mathcal{R}^{|\phi|}$ be the reactive power from battery and $B_{n,t} \in \mathcal{R}^{|\phi|}$ be the state of charge (SoC) of the battery, all defined at node $n$ and time $t$, assuming the nodes have Wye-connected single phase batteries. The symbols $\circ$, $(.)^\ast$ and $\text{diag}(.)$ represent the Hadamard product of matrices, the complex conjugate operator, and the diagonal operator, respectively.

\subsection{Mathematical Formulation}\label{math_model}

If $x=\{P^{\text{d}}_{n,t},P^{\text{c}}_{n,t},q^{\text{b}}_{n,t},S^{\text{S}}_{n,t}\}$ be the set of independent optimization variables $\forall \ t \in \mathcal{T}, \ n \in \mathcal{N}$, then the problem of optimally dispatching the batteries to minimize objective function $f(x)$ can be formulated as:
\begin{mini!}[3]
{P^{\text{d}}_{n,t},P^{\text{c}}_{n,t},q^{\text{b}}_{n,t},S^{\text{S}}_{n,t}}{f(x)\label{eq:P1_obj}}
{\label{eq:P1}}{}
\addConstraint{\begin{bmatrix}
W_{n,t} & S_{l,t}\\
S_{l,t}^{*} & I_{l,t}\end{bmatrix}}{=
\begin{bmatrix}
V_{n,t}\\
i_{l,t}\end{bmatrix}
\begin{bmatrix}
V_{n,t}\\
i_{l,t}\end{bmatrix}^{*} \qquad \forall l\in \mathcal{L}\label{eq:P1_BFM}}
\addConstraint{0}{=W_{n,t}-W_{m,t}+(S_{l,t}Z_l^*+Z_lS_{l,t})-Z_lI_{l,t}Z_l^*\ \ \forall l\in \mathcal{L}\label{eq:P1_volt_rel}}
\addConstraint{0}{=\text{diag}(S_{l,t}-Z_l I_{l,t}-\sum_{p}S_{p,t})+S^{\text{net}}_{n,t}\ \ \ \forall l\in \mathcal{L}\label{eq:P1_power_balance}}
\addConstraint{0}{=\text{real}(S^{\text{net}}_{n,t}-S^{\text{S}}_{n,t}+S^{\text{L}}_{n,t})-P^{\text{d}}_{n,t}+P^{\text{c}}_{n,t} \qquad \forall n\in \mathcal{G}\label{eq:P1_node_real_balance}}
\addConstraint{0}{=\text{imag}(S^{\text{net}}_{n,t}-S^{\text{S}}_{n,t}+S^{\text{L}}_{n,t})-q^{\text{b}}_{n,t} \qquad \forall n\in \mathcal{G}\label{eq:P1_node_reactive_balance}}
\addConstraint{|\text{diag}(S_{l,t})|}{\leq S_{\text{max},l} \qquad \forall l\in \mathcal{L}\label{eq:P1_line_const}}
\addConstraint{V_{\text{min},n}^2\leq \text{diag}(W_{n,t})}{\leq V_{\text{max},n}^2 \ \forall n\in \mathcal{N}\label{eq:P1_volt_const}}
\addConstraint{|S^{\text{S}}_{n,t}|}{\leq G_{\text{max},n} \qquad \forall n\in \mathcal{G}\label{eq:P1_solar_inv_limit}}
\addConstraint{(P^{\text{d}}_{n,t}-P^{\text{c}}_{n,t})^2+(q^{\text{b}}_{n,t})^2}{\leq H_{\text{max},n}^2, \qquad \forall n\in \mathcal{G}\label{eq:P1_battery_inv_limit}}
\addConstraint{0}{=B_{n,t+1}-B_{n,t}-\eta_{\text{c},n}P^{\text{c}}_{n,t}\Delta t+\frac{P^{\text{d}}_{n,t}}{\eta_{\text{d},n}}\Delta t \qquad \forall n\in \mathcal{G}\label{eq:P1_battery_power_rel}}
\addConstraint{B_{\text{min},n}\leq B_{n,t}}{\leq B_{\text{max},n} \qquad \forall n\in \mathcal{G}\label{eq:P1_SOC_limit}}
\addConstraint{ 0\leq P^{\text{d}}_{n,t}}{\leq P_{\text{max},n} \qquad \forall n\in \mathcal{G}\label{eq:P1_Pd_limit}}
\addConstraint{ 0\leq P^{\text{c}}_{n,t}}{\leq P_{\text{max},n} \qquad \forall n\in \mathcal{G}\label{eq:P1_Pc_limit}}
\addConstraint{P^{\text{d}}_{n,t}\circ P^{\text{c}}_{n,t}}{=0 \qquad \forall n\in \mathcal{G}\label{eq:P1_comp_const}}
\end{mini!}

where the above equations hold $\forall t \in \mathcal{T}$. In the optimization problem \eqref{eq:P1_obj}-\eqref{eq:P1_comp_const}, \eqref{eq:P1_obj} represents the objective function, which for our case is to minimize line losses, i.e., $f(x)=\sum_{l=1}^L\sum_{t=t_0}^T\sum_{\phi=1}^{|\phi|}(\text{diag}(R_{l}\circ I_{l,t}))$. \added{The constraint that relates the voltages and currents in the network to the variables $W_{n,t}$, $I_{l,t}$ and $S_{l,t}$ are in~\eqref{eq:P1_BFM} while \eqref{eq:P1_volt_rel} is the power flow equation relating} the voltage drop in the network with the branch power flows. Constraint~\eqref{eq:P1_power_balance} represents the power balance equation at each node \added{which makes sure that the power coming into a node equals power going out, \eqref{eq:P1_node_real_balance} and \eqref{eq:P1_node_reactive_balance} are the real and reactive nodal power balance equations}, \eqref{eq:P1_line_const} is the line power flow constraint with $S_{\text{max},l} \in \mathbb{R}^{|\phi|}$ being the apparent power limit of line $l$, \eqref{eq:P1_volt_const} is the voltage limit constraint at each node with $V_{\text{min},n} \in \mathbb{R}^{|\phi|}$ and $V_{\text{max},n} \in \mathbb{R}^{|\phi|}$ the lower and upper voltage limit respectively at node $n$, and \eqref{eq:P1_solar_inv_limit} represents the apparent power limit of the solar inverter at node $n$. Constraints~\eqref{eq:P1_battery_inv_limit}-\eqref{eq:P1_comp_const} describe the battery power, state of charge (SoC) and charge/discharge complementarity constraints with $H_{\text{max},n} \in \mathbb{R}^{|\phi|}$ as the apparent power limit of the battery inverter at node $n$ and $B_{\text{min},n} \in \mathbb{R}^{|\phi|}$ and $B_{\text{max},n} \in \mathbb{R}^{|\phi|}$ as the lower and upper state of charge limit of the battery respectively at node $n$ and $\Delta t$ is the prediction horizon step. \added{The variable types used in the formulation are presented in Table~\ref{table_var}.}

\begin{table}
\centering\caption{\label{table_var} Variables used in the model formulation.}
\begin{tabular}{ll}
    \toprule 
        \textbf{Variable type}   & \textbf{Variables}\\
    \midrule
    \textbf{Decision}           & $P^{\text{d}}_{n,t}$, $P^{\text{c}}_{n,t}$, $q^{\text{b}}_{n,t}$, $S^{\text{S}}_{n,t}$  \\
    \textbf{Dependent}          & $W_{n,t}$, $S_{l,t}$, $I_{l,t}$, $S^{\text{net}}_{n,t}$, $B_{n,t}$ \\
    \textbf{Constant parameters} & $Z_l$, $S^{\text{L}}_{n,t}$, $S_{\text{max},l}$, $V_{\text{min},n}$, $V_{\text{max},n}$, $G_{\text{max},n}$, $\eta_{\text{c},n}$, \\
                & $\eta_{\text{d},n}$, $H_{\text{max},n}$, $\Delta t$, $B_{\text{min},n}$, $B_{\text{max},n}$, $P_{\text{max},n}$  \\
     \bottomrule
\end{tabular}
\end{table}

The optimization model from~\eqref{eq:P1_obj}-\eqref{eq:P1_comp_const} is nonlinear due to the equality constraints in~\eqref{eq:P1_BFM} and~\eqref{eq:P1_comp_const}, which can also be equivalently expressed as an integer constraint using binary variables as shown in~\cite{nazir2018receding}. These constraints make the problem NP-hard. The nonlinear equality constraint in~\eqref{eq:P1_BFM} can equivalently be expressed by the following two constraints~\cite{gan2014convex}:
 \begin{equation}\label{eq:SDP_relax}
\begin{bmatrix}
W_{n,t} & S_{l,t}\\
S_{l,t}^* & I_{l,t}\end{bmatrix} \succeq 0, \ \ \
rank\begin{bmatrix}W_{n,t} & S_{l,t}\\
S_{l,t}^* & I_{l,t}\end{bmatrix} = 1\\
\end{equation}
The inequality constraint in equation \eqref{eq:SDP_relax} is an positive semi-definte (PSD) convex constraint, whereas the rank constraint is non-convex. Removing the rank constraint in~\eqref{eq:SDP_relax} leads to a convex SDP formulation, however, it is desirable to find a second order cone relaxation that can be solved with numerically robust solvers such as GUROBI~\cite{gurobi}. SOCP relaxation can be applied to the PSD constraint in equation \eqref{eq:SDP_relax} as in~\cite{kocuk2016strong,horn2012matrix} to obtain the following relaxed SOC constraints:, 

\begin{align}\label{eq:SOC_relax1}
\left\lVert
\frac{2W_{n,t}(i,j)}
{W_{n,t}(i,i)-W_{n,t}(j,j)}\right\rVert
_2 & \leq W_{n,t}(i,i)+W_{n,t}(j,j)\\
\label{eq:SOC_relax2}
\left\lVert
\frac{2I_{l,t}(i,j)}
{I_{l,t}(i,i)-I_{l,t}(j,j)}\right\rVert
_2 & \leq I_{l,t}(i,i)+I_{l,t}(j,j)\\
\label{eq:SOC_relax3}
\left\lVert
\frac{2S_{l,t}(i,j)}
{W_{n,t}(i,i)-I_{l,t}(j,j)}\right\rVert
_2 & \leq W_{n,t}(i,i)+I_{l,t}(j,j)
\end{align}

If the complementarity constraint given in equation ~\eqref{eq:P1_comp_const} is also relaxed, the optimization model becomes convex and can be solved with GUROBI (as a QCQP) or MOSEK (as an SOCP). However, the reader will notice that removing ~\eqref{eq:P1_comp_const} means that a feasible solution may charge and discharge a battery simultaneously, which is not physically realizable. Therefore, we need to analyze conditions under which the complementarity condition is satisfied at optimality. Improving upon the work in~\cite{nazir2018receding}, Section \ref{comp_const} provides conditions for avoiding simultaneous charging and discharging in batteries which are not dependent 
on the size of inverters.

\section{Relaxing battery complementarity constraint}\label{comp_const}
This section focuses on the phenomenon of simultaneous charging and discharging (SCD) of batteries in~\eqref{eq:P1}. \added{As detailed in~\cite{nazir2018receding} and~\cite[Appendix]{almassalkhi2015model} and illustrated in Fig.~\ref{fig_battery_relaxation}, SCD begets a family of battery dispatch solutions $(P_{n,t}^\text{d}, P_{n,t}^\text{c})$  whose nodal net-injections, $P_{n,t}^\text{d} - P_{n,t}^\text{c}$, are identical but whose effect on the battery's predicted state of charge introduces a undesirable prediction error proportional to  $\sum_{t\in \mathcal{T}} P_{n,t}^\text{d} P_{n,t}^\text{c}$. 
To avoid SCD, one can enforce complementarity condition~\eqref{eq:P1_comp_const} between charging and discharging decision variables of each battery.}  However,~\eqref{eq:P1_comp_const} renders the SOCP problem non-convex. One approach to eliminate the challenging constraint is to introduce a binary (charge/discharge) variable to formulate an equivalent mixed-integer SOCP (MISOCP) problem. However, despite recent advances in MIP solvers, the MISOCP is computationally challenging as the number of batteries or the time-horizon increases. Instead, this paper omits~\eqref{eq:P1_comp_const} entirely and then analyzes under which conditions the optimal solution satisfies the complementarity constraint. This ensures that a (near) globally optimal solution can be achieved in a computationally efficient manner.

In~\cite{almassalkhi2015model} the authors present a receding-horizon OPF scheme for congestion management with batteries and quantify the effects of simultaneous charging and discharging. They then devise an algorithm that uses primal and dual variables from a relaxed solution to parameterize a  complementarity-enforced instance of the problem. That approach ensures the dispatch is physically realizable and does not modify the original objective function. However, the computational effort has now doubled by having to solve the problem twice) and it may still not be optimal. In~\cite{li2016sufficient}, Karush-Kuhn-Tucker (KKT) conditions applied to a balanced bulk transmission network for the economic dispatch problem are analyzed and it is shown that under reasonable economic assumptions, simultaneous charging and discharging can be avoided. \added{The authors in~\cite{li2016non} introduced a method of modifying the objective function in order to avoid simultaneous charging and discharging, but do not provide the conditions under which it holds}. In~\cite{nazir2018receding}, we provided a preliminary analysis in a loss-minimizing distribution system setting with energy storage by imposing sufficient conditions on the Lagrange multipliers from the energy balance and inverter constraints. However, the conditions proposed in~\cite{nazir2018receding} are impractical for systems with bidirectional loads (e.g., batteries) and a high penetration of solar PV. In this paper, more general conditions are proposed that provably guarantee no SCD and hold for different practical optimization objectives and use-cases. Specific operating conditions are finally identified where SCD is provably optimal (which is undesired) and explicit methods are then presented that enforce complementarity.

The approach herein first augments the objective function to reduce the effects of SCD's fictitious energy losses, i.e., fictitious in the sense that the predicted state of charge will be different from the actual state of charge since the battery cannot operate with SCD (e.g., see Fig.~\ref{fig_battery_relaxation}), and is as follows:
 \begin{equation}\label{eq:aug_obj}
 f(x)+\alpha \sum_{t=t_0}^T\sum_{n=1}^{|G|}\sum_{\phi=1}^{|\phi|} P^{\text{d}}_{n,t}\left(\frac{1}{\eta_{\text{d},n}}-\eta_{\text{c},n}\right),
 \end{equation}
  where $f(x)$ is given in~\eqref{eq:P1_obj}.
Loss-minimization on the IEEE-13 node network with network parameters provided in\cite{kersting2001radial}, Fig.~\ref{fig_simCandD} illustrates the effects of SCD with a single battery. Without the complementarity constraint imposed, the optimizer may waste energy through charge/discharge inefficiencies to achieve a lower state of charge of the battery as shown in Fig.~\ref{fig_SCD_SOC}

 \begin{figure}[!ht]
\centering
\includegraphics[width=0.48\textwidth]{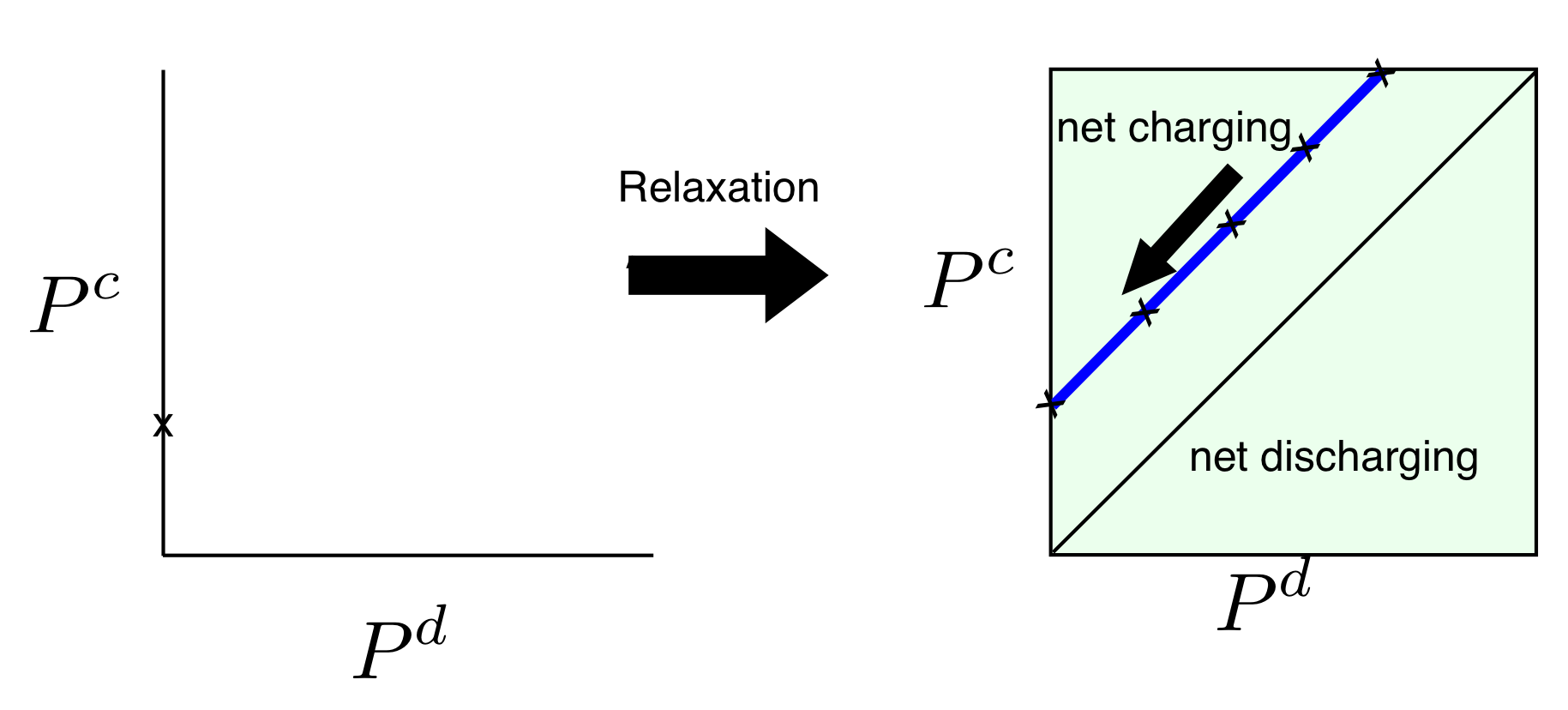}
\caption{Illustration of simultaneous charging and discharging (SCD) from relaxing the battery's complementarity constraint. (Left) SCD is enforced, so any net injection value, $P^{\text{d}}-P^{\text{c}}$, gives rise to only one solution.  (Right) the same net injection value gives rise to a family of solutions shown in blue where the battery's state of charge (SoC) are different due to SCD's so-called ``fictitious energy losses.''} 
\label{fig_battery_relaxation} 
\end{figure}


The addition of the battery power term in the objective function avoids SCD as shown in Fig.~\ref{fig_simCandD} with a negligible effect on the original objective function as illustrated with Fig.~\ref{fig_convex_MI}, where a comparison is presented with the exact mixed integer formulation. The solutions have the same optimized line losses as shown in Fig.~\ref{fig_convex_MI} and the addition of battery power term incentivizes the solution to points that satisfy the complementarity constraint.
\begin{figure}[!ht]
    \centering
    \includegraphics[width=0.41\textwidth]{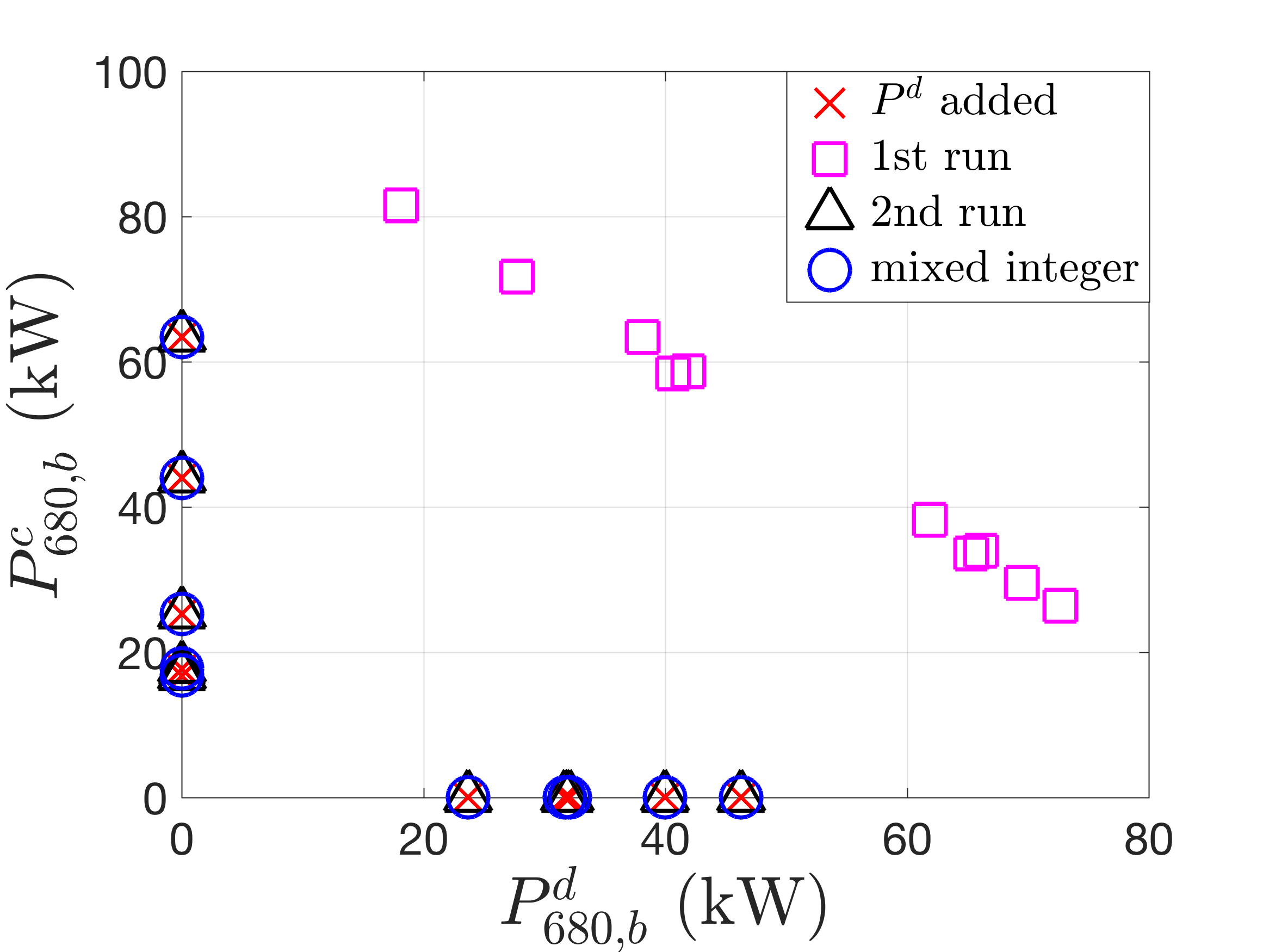}
  \caption{ Comparison of simultaneous charge and discharge in battery at node~680, phase $B$ for IEEE-13 node system between the cases with battery power term in objective, the 1st and 2nd run of the two-step algorithm presented in~\cite{almassalkhi2015model} and the mixed integer formulation. The reason for simultaneous occurrence of charge and discharge is that the objective function only has terms for the losses in the distribution lines and does not take into account the fictitious energy loss in the battery due to charging and discharging
  Thus, all solutions with the same value for $P^d-P^c$, are equivalent in the optimization solution, which begets simultaneous charging and discharging.}
   \label{fig_simCandD}
\end{figure}

\begin{figure} 
    \centering
  \subfloat [\label{fig_SCD_SOC}]{   \includegraphics[width=0.46\linewidth]{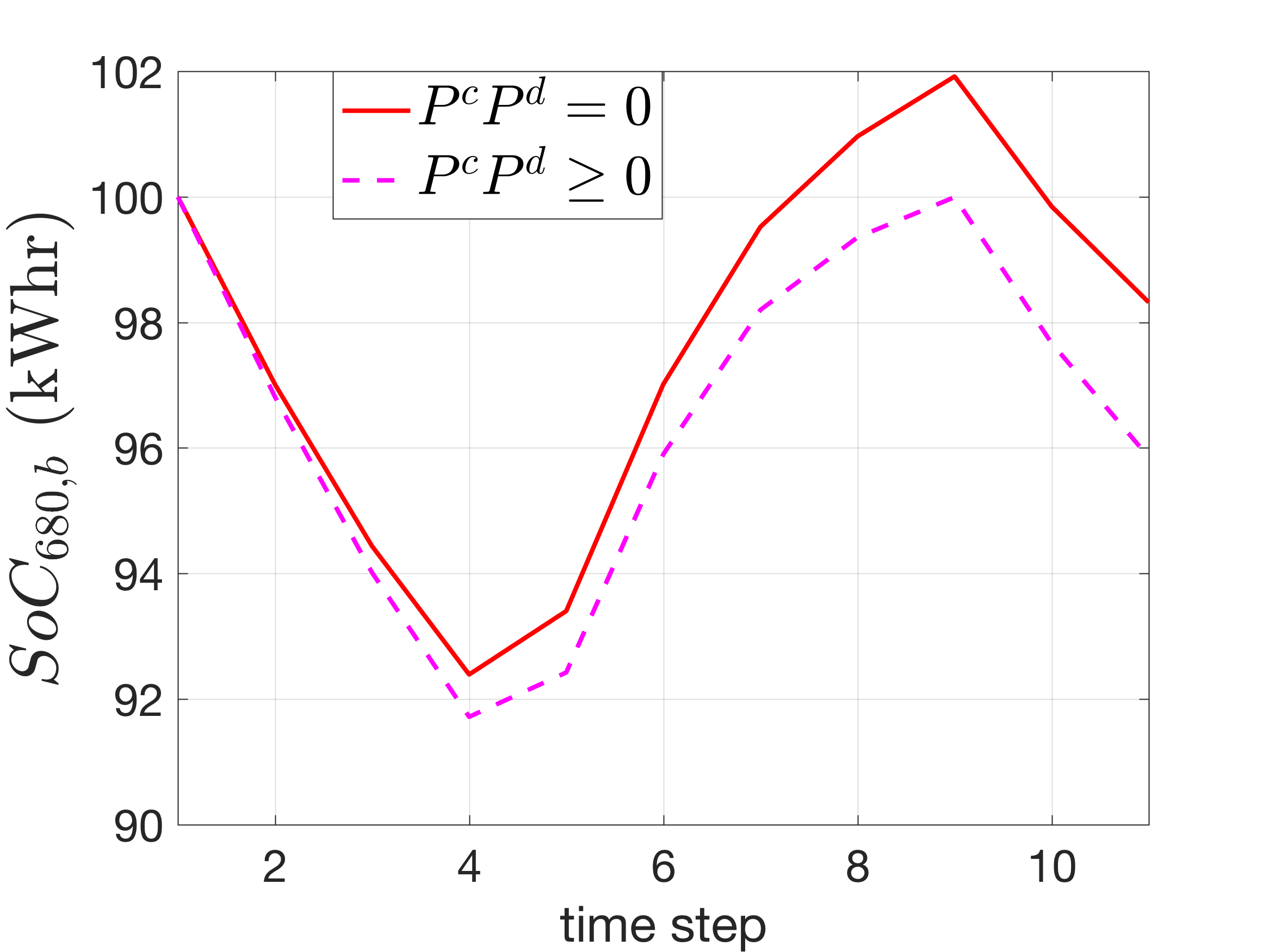}}
    \hfill
  \subfloat [\label{fig_convex_MI}]{    \includegraphics[width=0.48\linewidth]{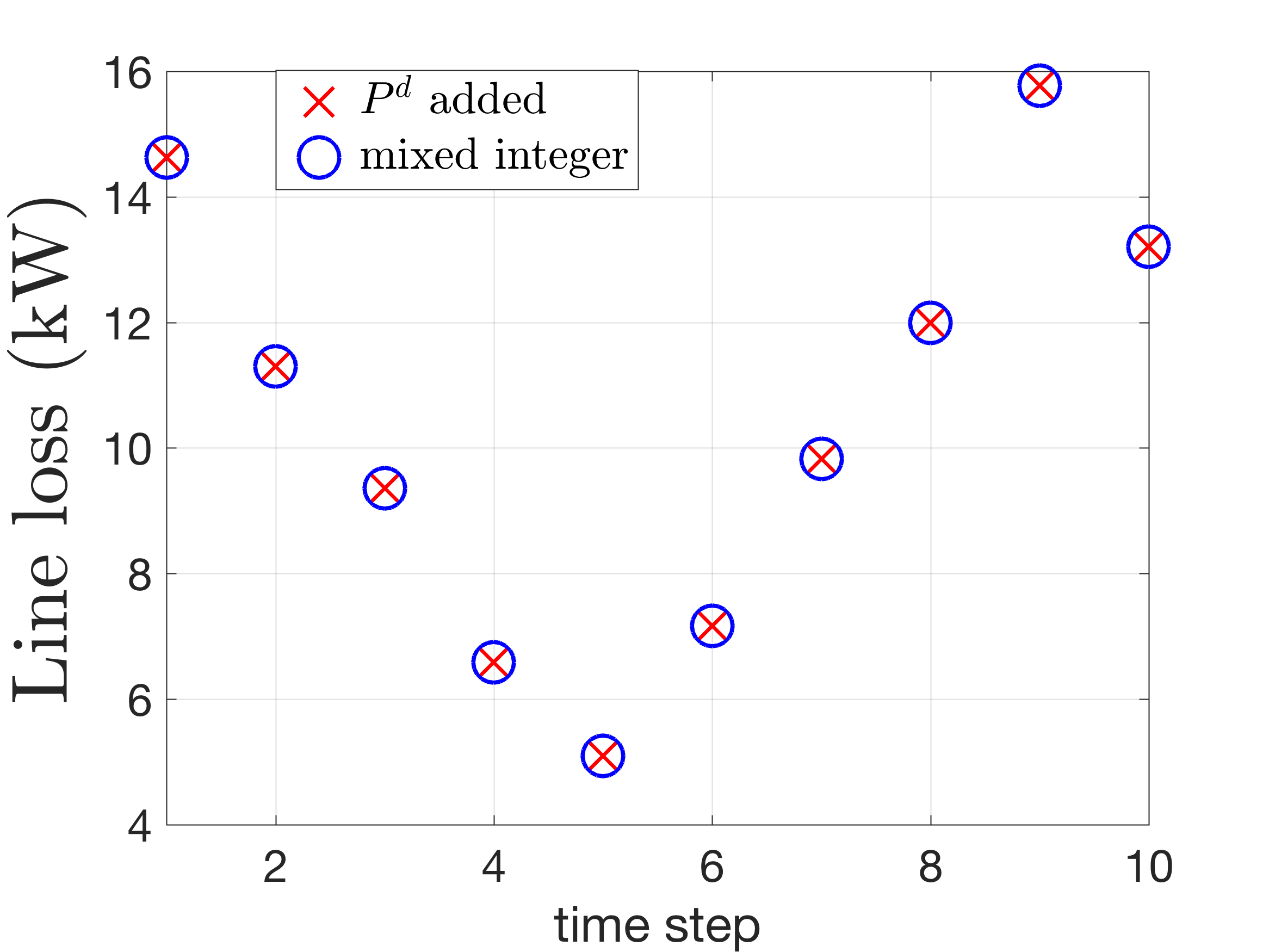}}
  \caption{(a) Comparison of state of charge of battery at node~680, phase $B$ for IEEE-13 node system with and without battery power term in objective. Due to the occurrence of simultaneous charge and discharge, energy is fictitiously consumed in the battery leading to a lower net state of charge. (b) Comparison of objective value (line loss) between the convex formulation and mixed integer formulation over a prediction horizon. The figure shows that the addition of battery power term to the objective of the convex formulation has negligible effect on the objective value of minimizing line losses.}
\end{figure}

To formalize this result, Theorem~\ref{Th1} below provides specific condition\added{s} under which the convex formulation with a differentiable objective function, $f(x)$ can avoid SCD with~\eqref{eq:aug_obj}. Specifically, the result holds for the following practical objectives in distribution networks:
\begin{itemize}
    \item $f(W)=\sum_i(W_i-W^{nom})^2$ (e.g., minimizing voltage deviation from nominal)\label{obj_1}
    \item $f(I)=\sum_l(\text{diag}(R_{l}\circ I_l))$ (e.g., minimizing network line loss)\label{obj_2}
    \item $f(P_{0,t})$=$(P_{0,t}-P^{\text{ref}})^2$ (i.e., tracking a grid/head-node reference power set-point)\label{obj_3}
    \item $f(P^{\text{d}}+P^{\text{c}})=\sum_i(P^d_i+P^c_i)$ (e.g., minimizing battery degradation)\label{obj_4}
    \item $f(P^{\text{d}}-P^{\text{c}})=(P^{\text{d}}-P^{\text{c}}-P^{\text{ref}})^2$ (i.e., tracking VB reference trajectory)
\end{itemize}

\begin{theorem}\label{Th1}
For the SOCP optimization problem~\eqref{eq:P1_obj}-\eqref{eq:P1_Pc_limit} with modified objective function~\eqref{eq:aug_obj}, the SCD relaxation is exact if the following conditions hold at each node $n$ and phase $\phi$:
\begin{enumerate}
    \item [C1:] $\frac{\partial f(x)}{\partial P^{\text{c}}}+\frac{\partial f(x)}{\partial P^{\text{d}}}\ge 0$,
    \item [C2:] $\alpha$ in~\eqref{eq:aug_obj} is strictly positive ($>0$),
    \item [C3:] \added{$\Gamma(t):=\sum_{\tau=t}^T(\beta_{1,n,\phi}(\tau)-\beta_{2,n,\phi}(\tau))\ge -\alpha$, $\beta_{1,n,\phi}(\tau), \beta_{2,n,\phi}(\tau) \in \mathbb{R}_+$ be Lagrange multipliers for the upper and lower bounds of inequality~\eqref{eq:P1_SOC_limit}, respectively.}
\end{enumerate}
\end{theorem}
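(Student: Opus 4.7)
The plan is to prove the theorem by contradiction using first-order optimality: I assume an optimal primal-dual pair in which simultaneous charging and discharging (SCD) occurs at some node/phase/time and show that the KKT stationarity conditions cannot hold under C1--C3. First, I attach Lagrange multipliers to every constraint in~\eqref{eq:P1} that touches $P^{\text{d}}_{n,t}$, $P^{\text{c}}_{n,t}$, or $B_{n,t}$: let $\lambda_{n,t}$ be the co-state of the SoC dynamics~\eqref{eq:P1_battery_power_rel}, $\beta_{1,n,\phi}(t),\beta_{2,n,\phi}(t)\ge 0$ for the SoC bounds~\eqref{eq:P1_SOC_limit}, $\mu^{\text{d}},\mu^{\text{c}},\nu^{\text{d}},\nu^{\text{c}}\ge 0$ for the box bounds~\eqref{eq:P1_Pd_limit}--\eqref{eq:P1_Pc_limit}, and $\rho\ge 0$ for the inverter cap~\eqref{eq:P1_battery_inv_limit}. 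Since the inverter cap only ``sees'' the net injection $P^{\text{d}}-P^{\text{c}}$, its contribution to $\partial/\partial P^{\text{d}}$ and $\partial/\partial P^{\text{c}}$ is $\pm 2\rho(P^{\text{d}}-P^{\text{c}})$, so the inverter multiplier cancels when the two stationarity equations are summed.

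The core calculation is to add the stationarity conditions in $P^{\text{d}}$ and $P^{\text{c}}$, which produces
\begin{equation*}
\frac{\partial f}{\partial P^{\text{d}}}+\frac{\partial f}{\partial P^{\text{c}}}+\left(\frac{1}{\eta_{\text{d}}}-\eta_{\text{c}}\right)(\alpha+\lambda_{n,t}\Delta t)+\nu^{\text{d}}+\nu^{\text{c}}-\mu^{\text{d}}-\mu^{\text{c}}=0.
\end{equation*}
Under SCD we have $P^{\text{d}},P^{\text{c}}>0$, so complementary slackness forces $\mu^{\text{d}}=\mu^{\text{c}}=0$, while the remaining $\nu^{\text{d}},\nu^{\text{c}}\ge 0$ only add nonnegative quantities to the left-hand side. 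C1 makes $\partial f/\partial P^{\text{d}}+\partial f/\partial P^{\text{c}}\ge 0$, and $1/\eta_{\text{d}}-\eta_{\text{c}}>0$ follows from sub-unity efficiencies. It then suffices to show $\alpha+\lambda_{n,t}\Delta t>0$, which would make the left-hand side strictly positive and yield the desired contradiction. I would derive this from the stationarity condition in $B_{n,t}$, which reduces to the adjoint recursion $\lambda_{n,t}-\lambda_{n,t-1}=\beta_{1,n,\phi}(t)-\beta_{2,n,\phi}(t)$; telescoping from the appropriate terminal condition identifies $-\lambda_{n,t}$ with the partial sum $\Gamma(t)$ from C3, so that C3 becomes exactly $\alpha+\lambda_{n,t}\ge 0$, and C2 promotes this to the strict inequality needed to close the contradiction.

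The main obstacle I anticipate is careful multiplier bookkeeping at two places. First, the inverter constraint~\eqref{eq:P1_battery_inv_limit} is quadratic and depends on $P^{\text{d}}-P^{\text{c}}$ rather than on them separately; one must verify that its differentiation contributes the claimed skew-symmetric terms that cancel in the sum, which is precisely where the proof uses the fact that the inverter couples only through the net injection. Second, the telescoping identity linking $\lambda_{n,t}$ to $\Gamma(t)$ depends on the exact time-indexing of~\eqref{eq:P1_battery_power_rel} and on the terminal condition imposed on $B_{n,T}$; small off-by-one errors change whether the sum runs from $\tau=t$ or $\tau=t+1$ and whether the terminal co-state is zero or equal to an end-SoC multiplier. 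Once these are pinned down, it is routine to verify that each of the five objectives listed before the theorem---squared voltage deviation, line loss, head-node tracking, battery power, and VB tracking---satisfies C1, so the conclusion applies uniformly.
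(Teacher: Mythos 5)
Your proposal follows essentially the same route as the paper's proof: sum the KKT stationarity conditions in $P^{\text{d}}$ and $P^{\text{c}}$, note that the power-balance and inverter multipliers cancel through the net injection, invoke C1 for the objective gradients and C2--C3 (via the telescoped SoC co-state $\Gamma(t)$) to make $\bigl(\tfrac{1}{\eta_{\text{d}}}-\eta_{\text{c}}\bigr)\bigl(\alpha+\Gamma(t)\Delta t\bigr)$ positive, and conclude by complementary slackness on the lower bounds of \eqref{eq:P1_Pd_limit}--\eqref{eq:P1_Pc_limit}; your contradiction framing is just the contrapositive of the paper's direct argument that $\underline{\lambda_{\text{c}}}+\underline{\lambda_{\text{d}}}>0$. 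The only caveat is the sign convention in your identification of $-\lambda_{n,t}$ with $\Gamma(t)$ (which as written would turn C3 into $\Gamma(t)\le\alpha$ rather than $\Gamma(t)\ge-\alpha$), but you flag exactly this bookkeeping as the step to pin down, and the paper itself leaves the telescoping implicit.
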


\added{The proof of Theorem \ref{Th1} is provided in Appendix \ref{AppendixA}.}
 Theorem \ref{Th1} showed that with the modified objective function given by \eqref{eq:aug_obj}, SCD can be avoided under certain conditions in order to obtain a physically realizable solution from the optimizer. 

The addition of the battery power term in the objective does, however, modify the objective function  resulting in a sub-optimal solutions compared to the original objective. When the battery is charging, i.e.,  $P^{\text{d}}=0$, the modified objective is the same as the original objective resulting in the same optimal value. When the battery is discharging, i.e., $P^{\text{d}}>0$, the modified objective is different from the original line loss minimization objective, however, as $\alpha$ can be chosen to be small the effect on the optimal value is negligible.

Figure~\ref{fig_simCandD} shows the comparison between the solution ($P^{\text{d}}, P^{\text{c}}$) obtained from the convex formulation and the mixed-integer formulation and shows that the solutions match. This is also shown in Fig.~\ref{fig_convex_MI}  which compares the optimal value (line loss) between the two formulations and shows that the objective values match. 

\begin{remark}
Theorem \ref{Th1} holds for the given objectives when conditions C1,~C2 and~C3 are satisfied. However, condition C3 can be restrictive, especially under a high penetration of renewable generation. Furthermore, certain objective functions like tracking battery state of charge require stricter conditions as shown in Corollary~\ref{corollary} in Appendix~\ref{AppendixB}. For such cases, the following methods are proposed to obtain a physically realizable solution that avoids simultaneous charging and discharging of batteries. 
\begin{itemize}
    \item  Large $\alpha$: In the case where $\Gamma(t)<0$, $\alpha$ can be chosen large enough to ensure that condition C1 is satisfied. The value of $\Gamma(t)$ may be estimated based upon the solar and load conditions. However, the drawback of this approach is that a large value of $\alpha$ would clearly shift the optimal solution.
    
    \item  Two-step battery dispatch: as presented in~\cite{almassalkhi2015model}, the first run permits SCD and the the second enforces complimentarity based on the net-effect of the first solution to obtain a physically realizable solution. This method can provide a near optimal feasible solution as shown in Fig.~\ref{fig_simCandD} but doubles the run-time. Future work will further explore how this two-step technique can provide optimality certificates and will improve its implementation to avoid the doubling of the solve time.
    
    \item  Simplified battery model: In this method, the battery model in \eqref{eq:P1_battery_power_rel} is replaced by an approximated battery model that uses a single standing-loss efficiency $\eta_{eq}$ instead as shown below:
    \begin{align*}
        B_{n,t+1}=\eta_{eq}B_{n,t} - \Delta t P^{\text{b}}_{n,t}
    \end{align*}
    where $P^{\text{b}}_{n,t}\in \mathbf{R}^{|\phi|}$ is the net battery power injection. The value of $\eta_{eq}$ can be estimated based upon expected battery schedule and the values of $\eta_d$ and $\eta_c$. Future work will explore a mapping between the two battery models as a way to estimate $\eta_{eq}$ to minimize modeling error over the horizon with respect to the actual battery model in~\eqref{eq:P1_battery_power_rel}.
\end{itemize}

 
 \end{remark}
 
Based on the results of this section, a convex formulation of the multi-period three-phase OPF can be obtained that satisfies the complementarity condition between charging and discharging of batteries under certain conditions. When the conditions are not satisfied, this work proposes techniques to obtain a near optimal solution that enforces complementarity. However, the second order cone relaxation of the nonlinear power flow equations may engender solutions that are not physically realizable. To guarantee realizability, the next section presents a nonlinear programming (NLP) formulation of the OPF problem that is initialized with the relaxed SOCP solution. Note that the NLP initialization goes beyond just a warm-start and includes a novel mechanism to account for the multi-period formulation inherent to an energy storage trajectory. 

\section{Multi-period coupling of SOCP with NLP}\label{SOCP_NLP}

The original OPF formulation given by~ \eqref{eq:P1_obj}-\eqref{eq:P1_comp_const} is non-convex because of the nonlinear power flow constraint in~\eqref{eq:P1_BFM} and the SCD complementarity constraint in~\eqref{eq:P1_comp_const}. The two constraints are relaxed  to obtain an SOCP formulation of the OPF problem. The non-convex constraint~\eqref{eq:P1_comp_const} is relaxed as explained in Section~\ref{comp_const}, which provides conditions under which the SOCP solution is tight (with respect to the complementarity condition). 
However, the relaxation of the nonlinear power flow model in~\eqref{eq:P1_BFM}  with the second-order cone constraints~\eqref{eq:SOC_relax1}-\eqref{eq:SOC_relax3} can result in non-physical solutions to the OPF problem as has recently been shown in~\cite{wang2018chordal}.

Thus, if we seek a physically realizable solution for a general objective function, we need a nonlinear programming (NLP) formulation.  Thus, we seek to leverage the multi-period solution available from the SOCP. However, NLPs are not scalable and the solve time increases dramatically with the increase in problem size (and coupling)~\cite{boyd2004convex}
To overcome this challenge, we propose a time-decoupled approach by fixing the battery's active power set-points in the NLP based on the solution obtained from the SOCP. This allows the NLP to focus on reactive power set-points and voltage limits, which aligns with recent analysis~\cite{molzahn2017computing}. In~\cite{molzahn2017computing}, it is shown how reactive power and voltage limits lead to disconnections in the power flow solution space resulting in a non-zero duality gap for the relaxed OPF. Based on these observations, an SOCP-NLP coupled algorithm is developed  as shown in Fig.~\ref{fig:SOCP_NLP}, where the solution obtained from the SOCP is passed to the NLP solver. Prior work in literature, such as~\cite{kocuk2016strong}, have proposed the idea of using the solution of a convex relaxation as an initial starting point for solving the full, nonlinear ACOPF. However, herein we extend the notion of a ``warm start'' to the multi-period domain. Specifically, we decouple the multi-period NLP by fixing the active power set-points of the batteries to the solution of the convex relaxation (SOCP).
Keeping the active power solutions constant leads to fixing the state of charge of the batteries and, as a result, results in a decoupling of the time-steps of the prediction horizon in the NLP. Thus, each time-step can be solved independently and in parallel (as independent NLPs), which  leads to a scalable implementation compared to solving the multi-period NLP.

\begin{figure}[h]
\centering
\includegraphics[width=0.5\textwidth]{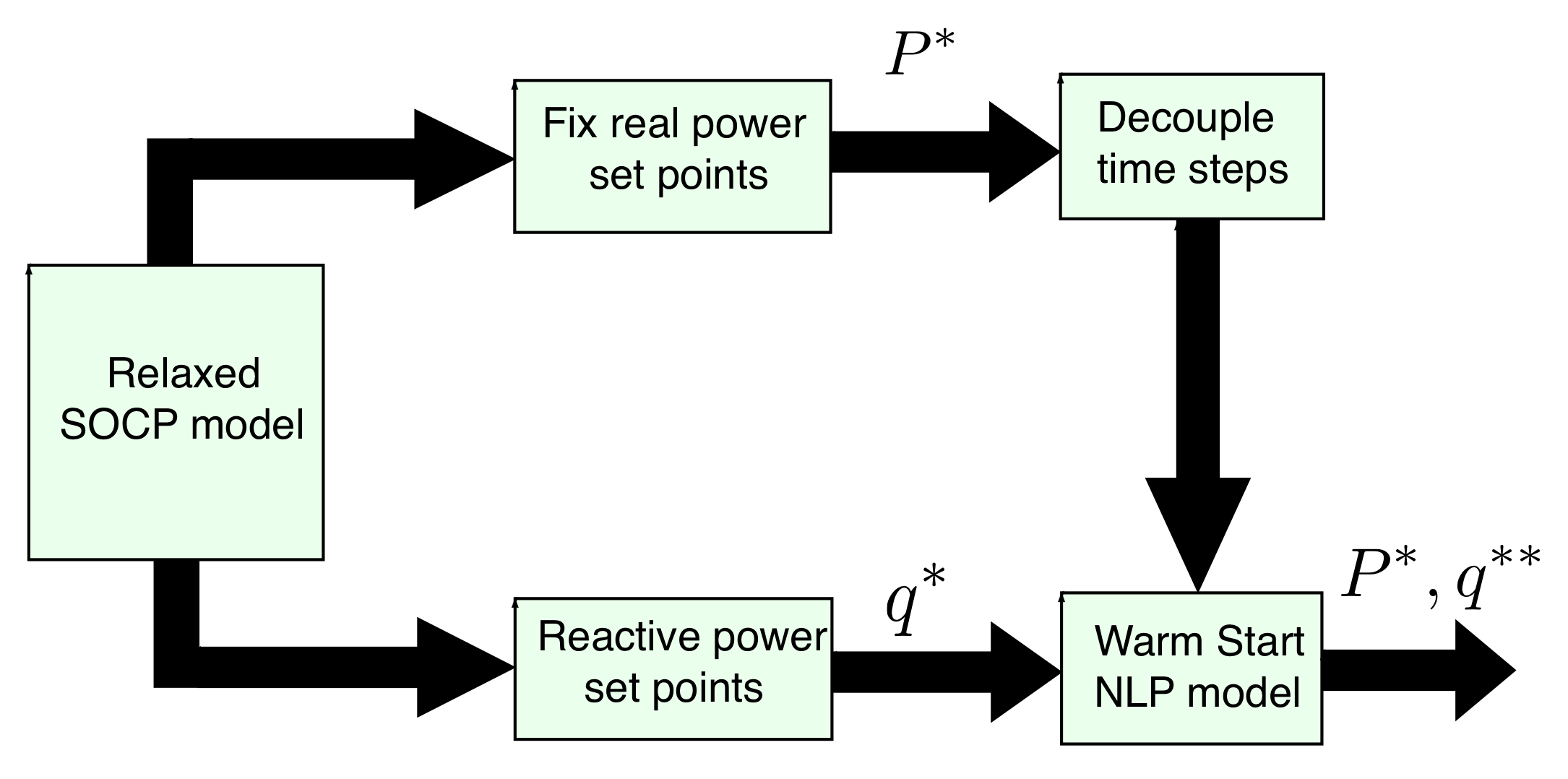}
\caption{\label{fig:SOCP_NLP}Coupling of SOCP with NLP by fixing real power solutions from SOCP and hence decoupling the NLP to obtain a feasible solution.}
\end{figure}

This is further explained through Fig.~\ref{fig:SOCP_NLP_time} where for each time-step of the prediction, the reactive power range available to the NLP is constrained by the SOCP's solution. \added{That is, Figure~\ref{fig:SOCP_NLP_time} illustrates the decomposition approach presented herein by showing the effect of the SOCP's optimized active power trajectory on the feasible set of the reactive power of the NLP problem.}
\begin{figure}[h]
\centering
\includegraphics[width=0.39\textwidth]{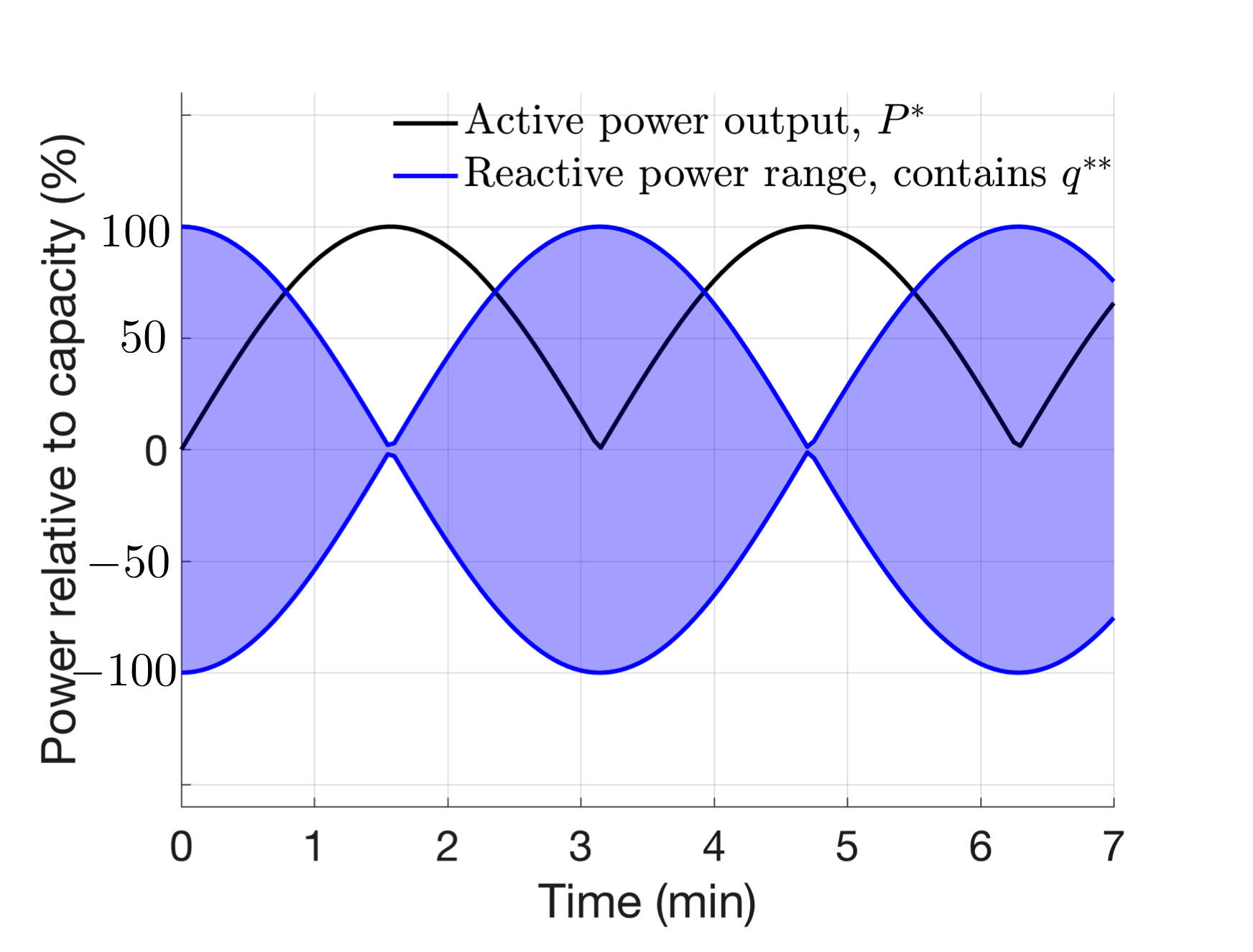}
\caption{\label{fig:SOCP_NLP_time} Available reactive power variation range for NLP across multiple time steps based on the active power trajectory provided by the SOCP.}
\end{figure}
\begin{remark}

The decoupling of the time-steps reduces the feasible set of the optimization problem and, hence, increases the optimal value. Thus, the decoupled problem represents an upper bound on the time-coupled nonlinear problem, which in turn is lower bounded by the SOCP as shown below:
\begin{equation}\label{soc_nlp_relation}
    SOCP_{opt}\leq NLP_{opt}\leq DNLP_{opt}
\end{equation}
where $NLP_{opt}$ represents the optimal value of the time-coupled nonlinear program and $DNLP_{opt}$ the optimal value of the time-decoupled nonlinear program (DNLP).
\end{remark}
The DNLP problem at each time-step $t$ of the prediction horizon can then be expressed as:
\begin{subequations}\label{P2}
\begin{equation}\label{eq:P2_obj}
\min_x\ \sum_{l=1}^L\sum_{\phi=1}^{|\phi|}\text{diag}(R_{l}\circ I_{l,t})
\end{equation}
\begin{equation}\label{eq:P2_ref}
  s.t:\  \eqref{eq:P1_BFM}-\eqref{eq:P1_battery_inv_limit}
\end{equation}
\begin{equation}\label{eq:P2_Pdfix}
    P^{\text{d}}_{n,t}=P^{\text{d}*}
\end{equation}
\begin{equation}\label{eq:P2_Pcfix}
    P^{\text{c}}_{n,t}=P^{\text{c}*}
\end{equation}
\end{subequations}
where $P^{\text{c}*} \in \mathcal{R}^{|\phi|}$ and $P^{\text{d}*} \in \mathcal{R}^{|\phi|}$ are the charge and discharge power of the battery obtained from the SOCP at node $n$ and time $t$, such that $P^*=P^{\text{d}*}-P^{\text{c}*}$. The NLP given by equations \eqref{eq:P2_obj}-\eqref{eq:P2_Pcfix} is solved separately at each step of the prediction horizon to obtain a feasible plus (near) optimal solution with guaranteed feasibility and a bound on the optimality, as the relaxed SOCP provides a lower bound on the optimal value of the original nonlinear problem~\cite{boyd2004convex}. Utilizing this SOCP-NLP coupled optimization framework, a scalable solution of three-phase OPF problem can be obtained rapidly, plus the framework provides bounds and guarantees on feasibility and optimality of the solution, where the upper-bound on the global optimality gap is computed from
\begin{equation}\label{eq:opt_gap}
    \text{\% optimality gap} \le \frac{DNLP_{opt}-SOCP_{opt}}{DNLP_{opt}}\times 100.
\end{equation}
In the next section, simulation tests are conducted on unbalanced IEEE test feeders to verify the feasibility of the proposed formulation and investigate the global optimality gap. The validation is conducted by
using forward-backward sweep in GridLab-D.

\section{Test case results and validation}\label{sim_results}
\subsection{\added{Case study description}}
Simulation-based analysis of the multiperiod SOCP-NLP algorithm presented above is conducted on the unbalanced 123-node IEEE test feeder with a base voltage of 2.4kV and base apparent power of 1 MVA. The algorithm is implemented in receding-horizon fashion. That is, the SOCP results in an open-loop, optimal battery and inverter control schedule, which is used by the NLP  to calculate a physically realizable schedule that is implemented by GridLab-D (i.e., the ``plant'') to determine the resulting AC power flows. The forecasts of demand and renewable generation are then updated and the SOCP-NLP implementation repeats. A sample forecast of aggregate solar, demand and net-demand over a prediction horizon is shown in Fig. ~\ref{fig:net_demand_profile}.
\begin{figure}[h]
\centering
\includegraphics[width=0.37\textwidth]{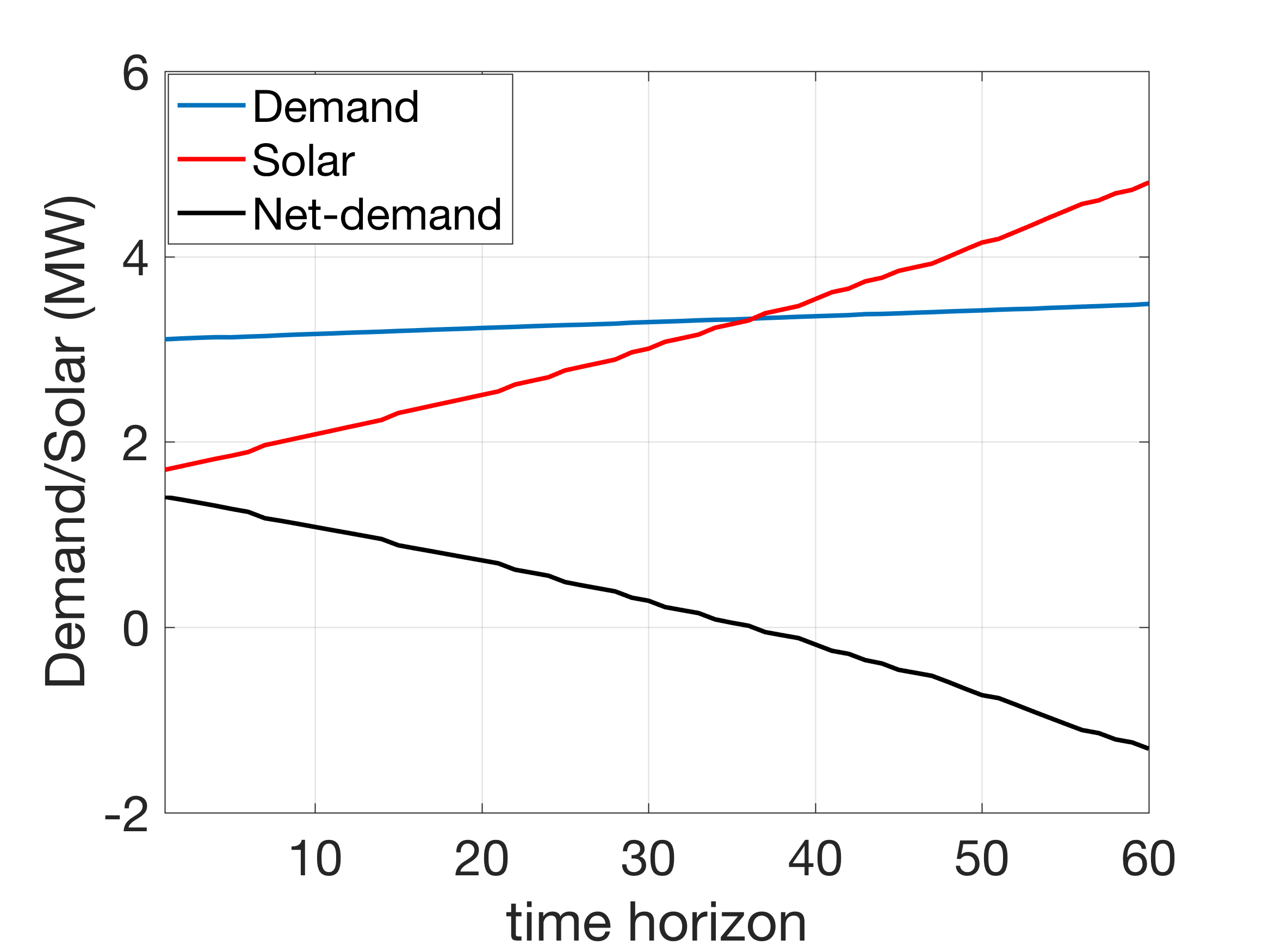}
\caption{\label{fig:net_demand_profile} Aggregate solar, demand and net-demand profile over a prediction horizon.}
\end{figure}

Distributed storage and solar PV units are added at random to 16 nodes in the network which can supply active and reactive power through four quadrant operation. Each storage unit has an energy capacity of 40 kWh and an apparent power rating of 50 kVA, whereas each solar PV unit has a rating of 100 kVA. The solar and load profile over the 30-step prediction horizon, with each step being 1~minute duration, are constructed from the minutely forecast data available~\cite{bing2012solar}. 
Let mean load, $\mu_l$, be the base load of the IEEE-123 node system and $\mu_s$ be the mean solar and equal to 100kW 
Discrete control devices such as switches, capacitor banks and transformers are fixed at their nominal value for this study. A three phase OPF is run in a receding horizon fashion with a prediction horizon of 30 time-steps, for the dispatch of controllable assets of the network to minimize the network losses. The set-points provided by the solution of the SOCP are used to initialize an NLP to provide a feasible solution. The SOCP is modeled in JuMP~\cite{dunning2017jump}, with Julia and solved using GUROBI~\cite{gurobi}. \added{The multi-period SOCP has 108,000 variables, 48,000 linear constraints and 81,000 SOC constraints.} The NLP is also modeled in JuMP, but solved with IPOPT~\cite{wachter2006implementation} using {\tt \small HSL\_MA86} solver~\cite{hsl2007collection}. \added{The single-period NLP has 3,600 variables, 1,600 linear constraints, 700 SOC constraints and 2,000 non-linear constraints.}
\subsection{\added{Data management}}
\added{To enable the presented framework, it is assumed that the minutely PV production forecast data and the demand profile data over the 30 minute horizon, as shown in Fig.~\ref{fig:net_demand_profile}, is available to the central dispatcher. In this paper, we assume a perfect forecast, whereas future work will investigate the role of uncertainty and robustness to forecast error. Such minutely solar PV forecasts  are  available today  at  minutely  forecasts with a 60-minute prediction horizon~\cite{bing2012solar}. It is also assumed that the dispatcher knows about the power rating and capacity of the available PV units and the updated state of charge of the distributed storage units. This dispatcher could be a distribution system operator (DSO; e.g., NY REV's DSIP~\cite{conED}), so it is reasonable to assume that such system information is available. Furthermore, it is assumed that the DSO knows the network topology, so it can formulate and solve the optimization problem based on network parameters and dispatch available flexible resources accordingly.}

\subsection{\added{Test case results}}
The results obtained under four different solar and load cases as shown in Table~\ref{table_cases}, where high load and high solar corresponds to the base values and low load and low solar corresponds to a mean value of 50\% of base. These cases are utilized to show the feasibility, optimality, gap and solve time of the formulated algorithms.
\begin{table}
\centering\caption{\label{table_cases} Different solar and load cases.}
\begin{tabular}{rcc}
    \toprule 
                            & Low load: 50\% load & High load: 100\% load \\
    \midrule
    Low solar: 50\% Solar   & Case LL & Case HL  \\
    High solar: 100\% Solar & Case LH & Case HH  \\
     \bottomrule
\end{tabular}
\end{table}

 The result in Table~\ref{table_optvalue} show that the optimality gap of the obtained solution is always less than 3\%, as the SOCP solution provides the lower bound to the global optimum. \added{The RMSE and worst case values are calculated based on the optimal values of the SOCP and the NLP run in a receding horizon fashion through simulations over a horizon of one hour.}
 \added{To further investigate the optimality gap, Fig.~\ref{fig:opt_gap_sweep} shows the optimality gap as the solar penetration level in the system is varied for the base load case. From the figure it can be seen that the optimality gap is always below 3\% for the different solar penetration levels.}
 
 The feasiblity of the NLP solution is tested against GridLab-D and the validation is given in Fig. \ref{fig:feasibility_NLP} for case HH, which shows that the voltages obtained from the NLP match closely with those obtained through a power flow performed in GridLab-D using backward-forward sweep. \added{NLPs are not scalable and the solve time increases dramatically with the increase in problem size (and coupling) as can be seen from Fig.~\ref{fig:time_horizon_nlp} for case HH, which shows the increase in solve time as the length of the prediction horizon increases}. The computation time of the \added{decoupled} algorithm is illustrated in Table \ref{table_solvetime} showing that the mean total solve time at each time step for SOCP+NLP is always under 45 seconds, providing sufficient time for communication delays in order to guarantee a solve time of under one minute for the dispatch of distributed storage to counter the fast-time variation in renewable generation. \added{The SOCP time is the time it takes to solve the multi-period optimization with a time-horizon of 30 steps, whereas the NLP time is the time it takes to solve each time-instant in a decoupled and parallel form.} Figures \ref{fig:reactive_profile1}-\ref{fig:reactive_profile4} shows the worst case difference in DER reactive power generation over the prediction horizon between the SOCP and the NLP, whereas Fig. \ref{fig:time_horizon} shows the variation in SOCP solve time as the size of the receding horizon is increased for case HH. \added{In the figure, the edges of the box represent the 25th and 75th percentile of data, whereas the $+$ sign represents the most extreme value in the dataset.} It can be seen from Fig. \ref{fig:time_horizon} that the SOCP algorithm scales well with the increase in horizon size. 



\begin{table}[h!]
\centering
\caption{\label{table_optvalue}Comparison of optimality gap values of SOCP and NLP.}
{
\begin{tabular}{r c c}
\toprule
Case & RMSE & Worst case \\
\midrule
            LL & 0.43 & 1.25\\
            HL & 0.83 & 0.91\\
            LH & 1.14 & 1.33\\
            HH & 0.88 & 2.10
\\ \bottomrule
\end{tabular}
}
\end{table}


\begin{figure}[h]
\centering
\subfloat[\label{fig:opt_gap_sweep}]{
\includegraphics[width=0.49\linewidth]{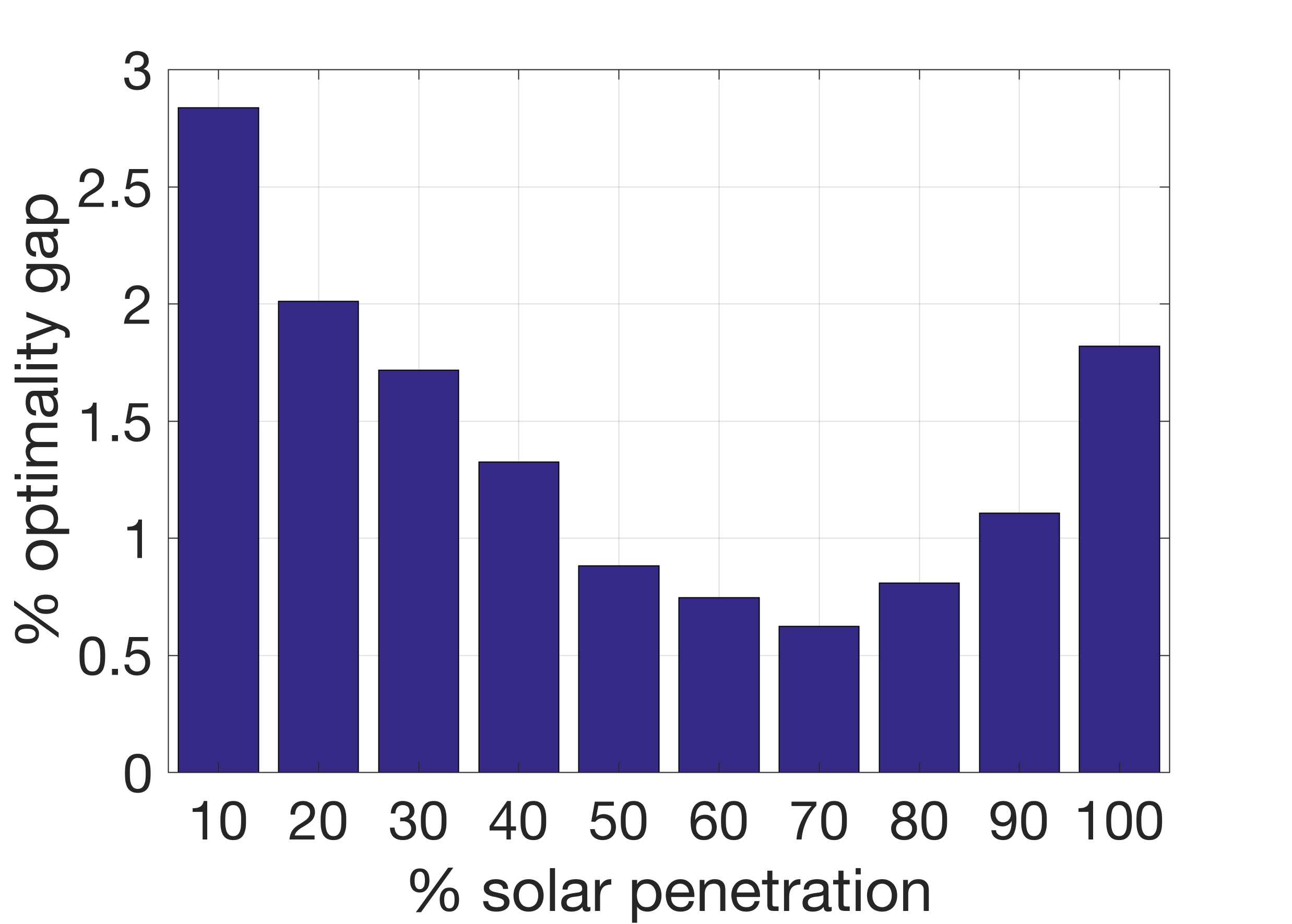}}
    \hfill
\subfloat[\label{fig:feasibility_NLP}]{
\includegraphics[width=0.49\linewidth]{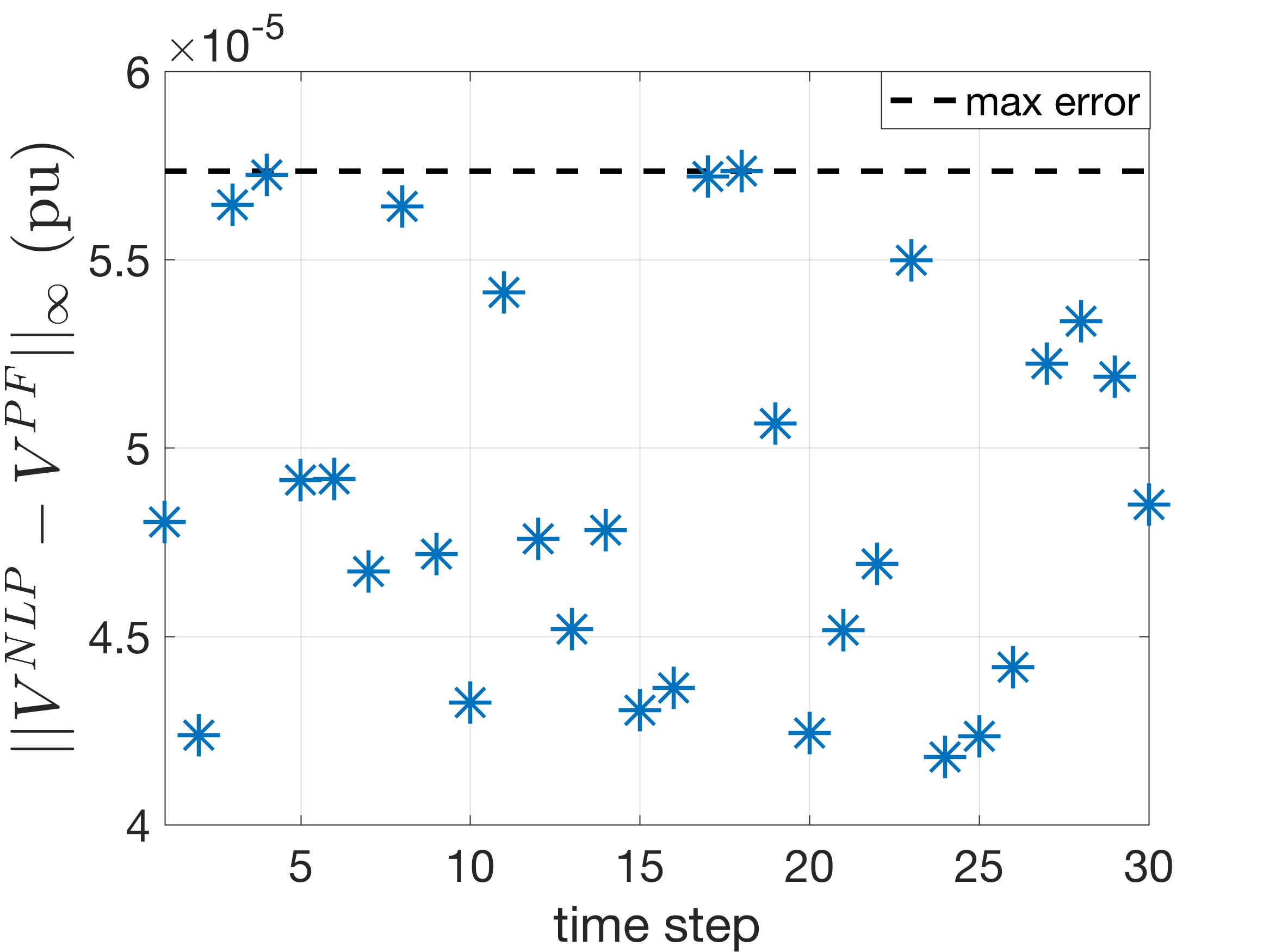}}
\caption{(a) Variation of optimality gap with change in \% solar penetration. (b)Worst case voltage error between NLP and power flow (PF) in GridLab-D over the time horizon showing the feasibility of the NLP solution.}
\end{figure}

\begin{figure} \label{fig:reactive_profile}
    \centering
  \subfloat[\label{fig:reactive_profile1}]{%
       \includegraphics[width=0.48\linewidth]{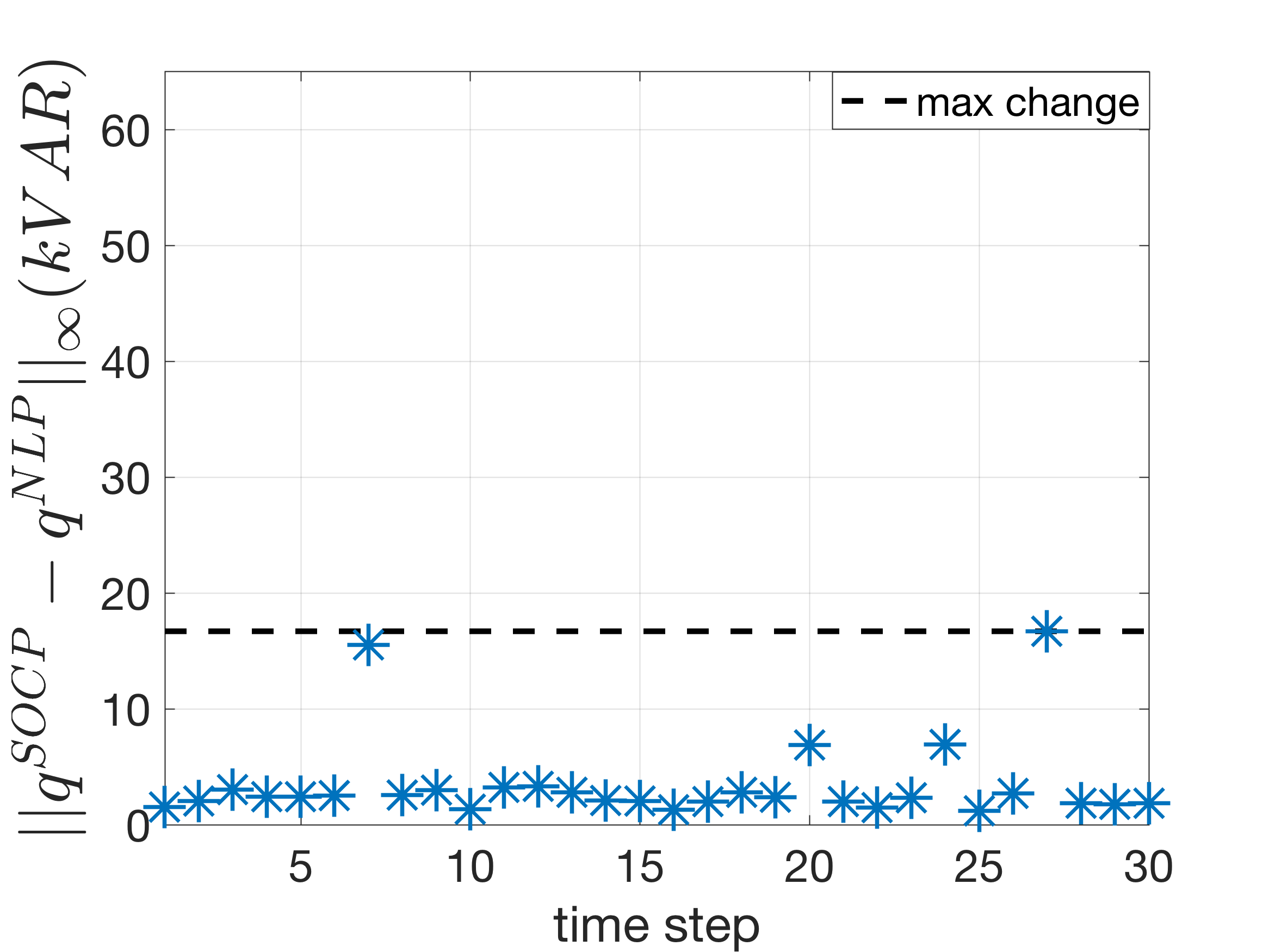}}
    \hfill
  \subfloat[\label{fig:reactive_profile2}]{%
        \includegraphics[width=0.48\linewidth]{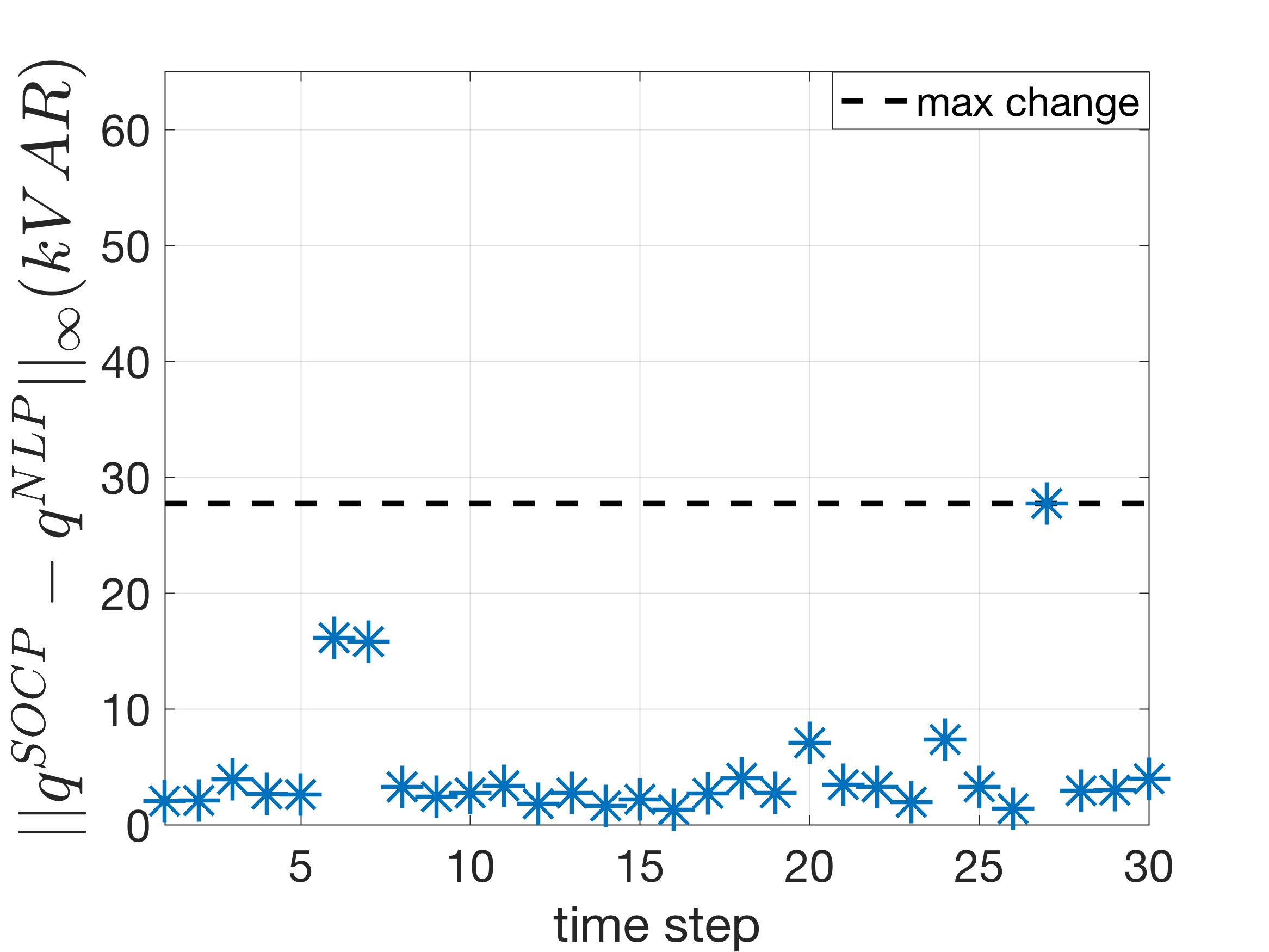}}
    \\
  \subfloat[\label{fig:reactive_profile3}]{%
            \includegraphics[width=0.48\linewidth]{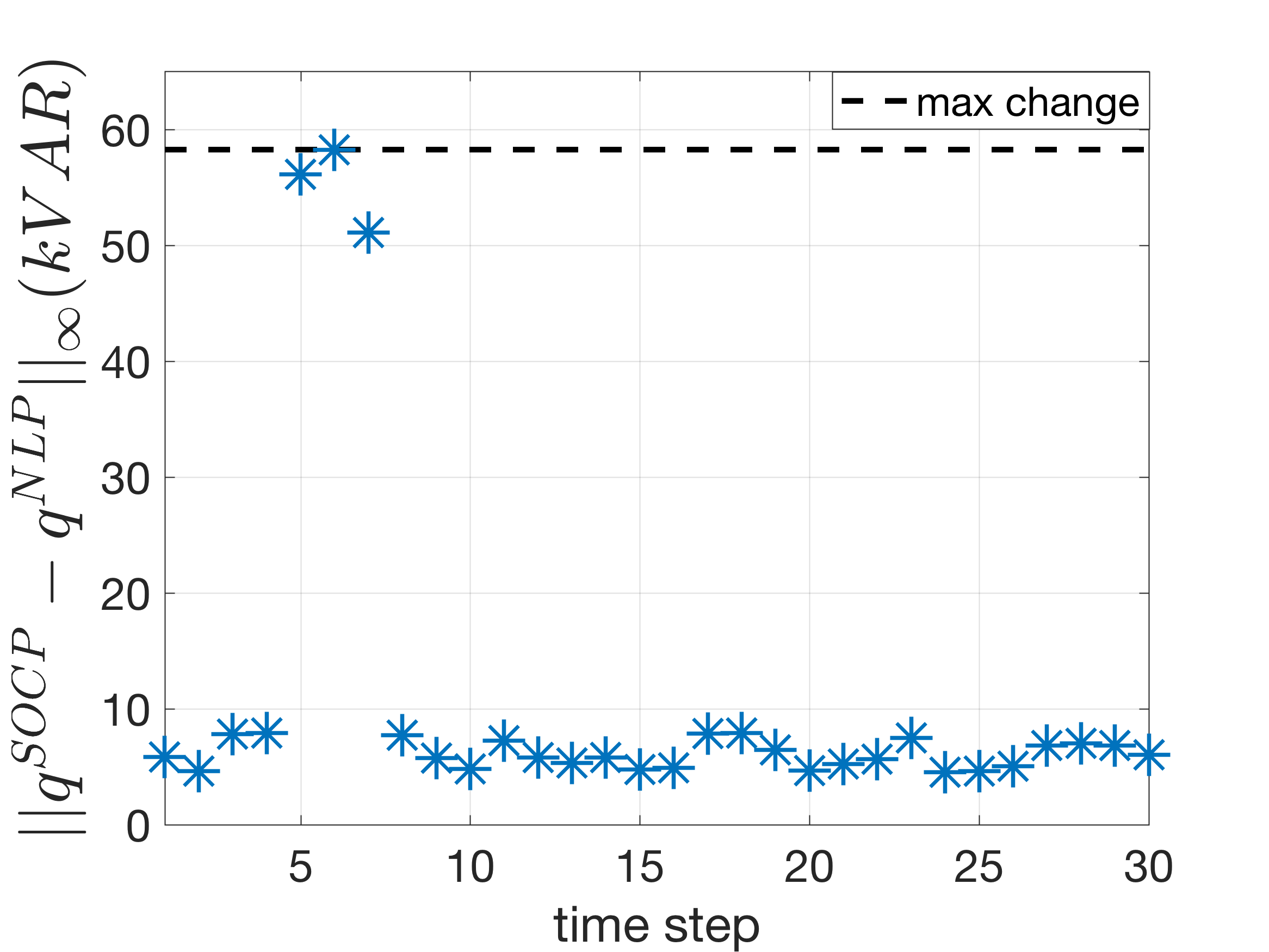}}
    \hfill
  \subfloat[\label{fig:reactive_profile4}]{%
        \includegraphics[width=0.48\linewidth]{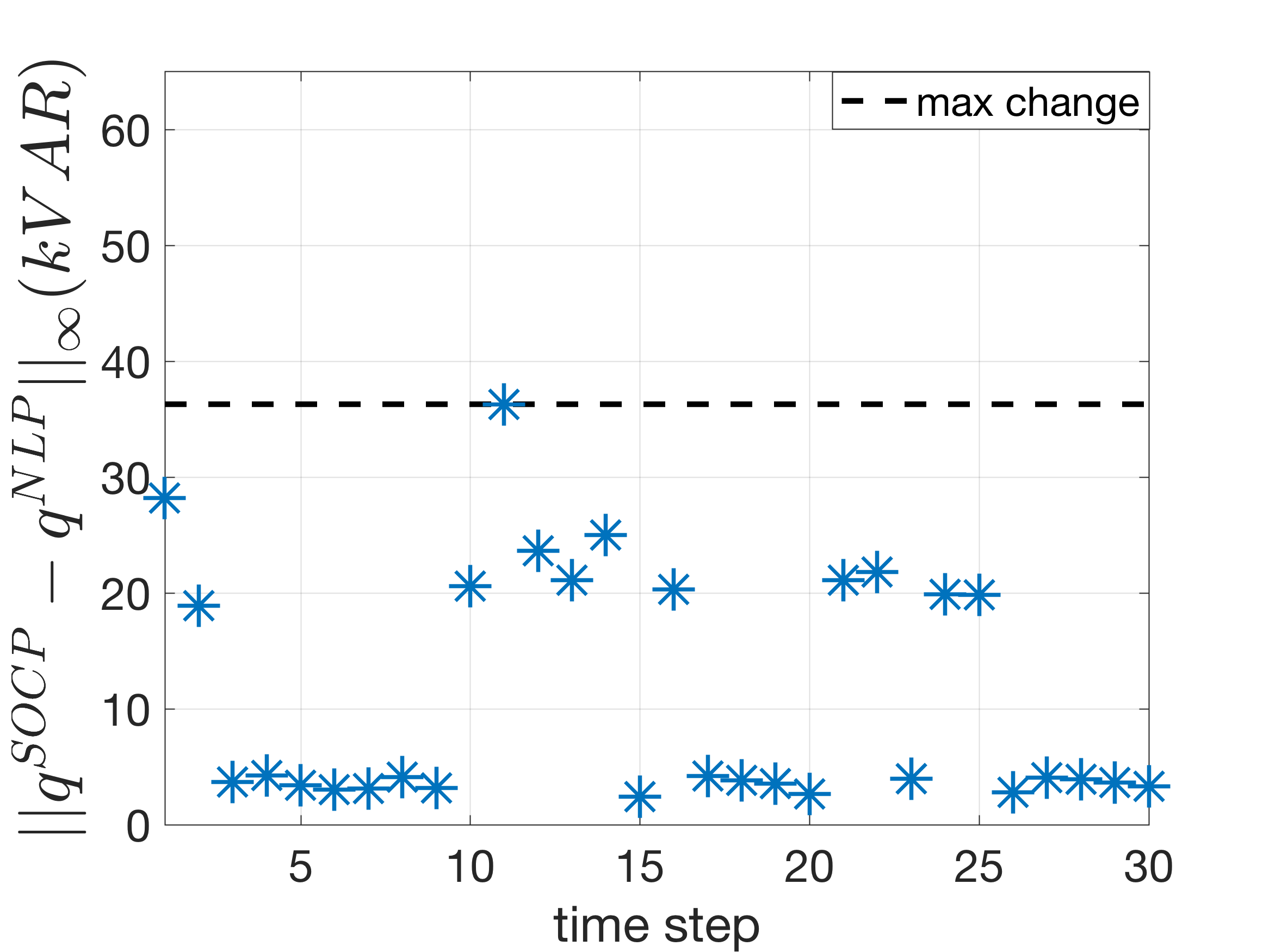}}
  \caption{Comparison of reactive power generation obtained from NLP and SOCP for IEEE-123 node system under the following cases: (a) low load, low solar (b) high load, low solar (c) low load, high solar (d) high load, high solar.}
\end{figure}


\begin{table}[h!]
\centering
\caption{\label{table_solvetime}Comparing solver times for the SOCP-NLP algorithm.}
{
\begin{tabular}{l c c c c}
\toprule
Solver time (s) & Case LL & Case HL & Case LH & Case HH \\
\midrule
            $(\mu,\, \sigma)_\text{SOCP}$ & (42.4, 6.3) & (17.9, 1.5) & (31.4, 9.6) & (24.1, 2.2)\\
            $(\mu_,\, \sigma)_\text{NLP}$ & (1.8, 0.3) & (2.2, 0.8) & (1.9, 0.2) & (2.1, 0.7)\\
            $(\mu,\, \sigma)_\text{total}$ & (44.2, 6.4) & (20.1, 1.6) & (33.3, 9.6) & (26.3, 2.5)\\
\bottomrule
\end{tabular}
}
\end{table}


\begin{figure}[h]
\centering
\subfloat[\label{fig:time_horizon_nlp}]{
\includegraphics[width=0.49\linewidth]{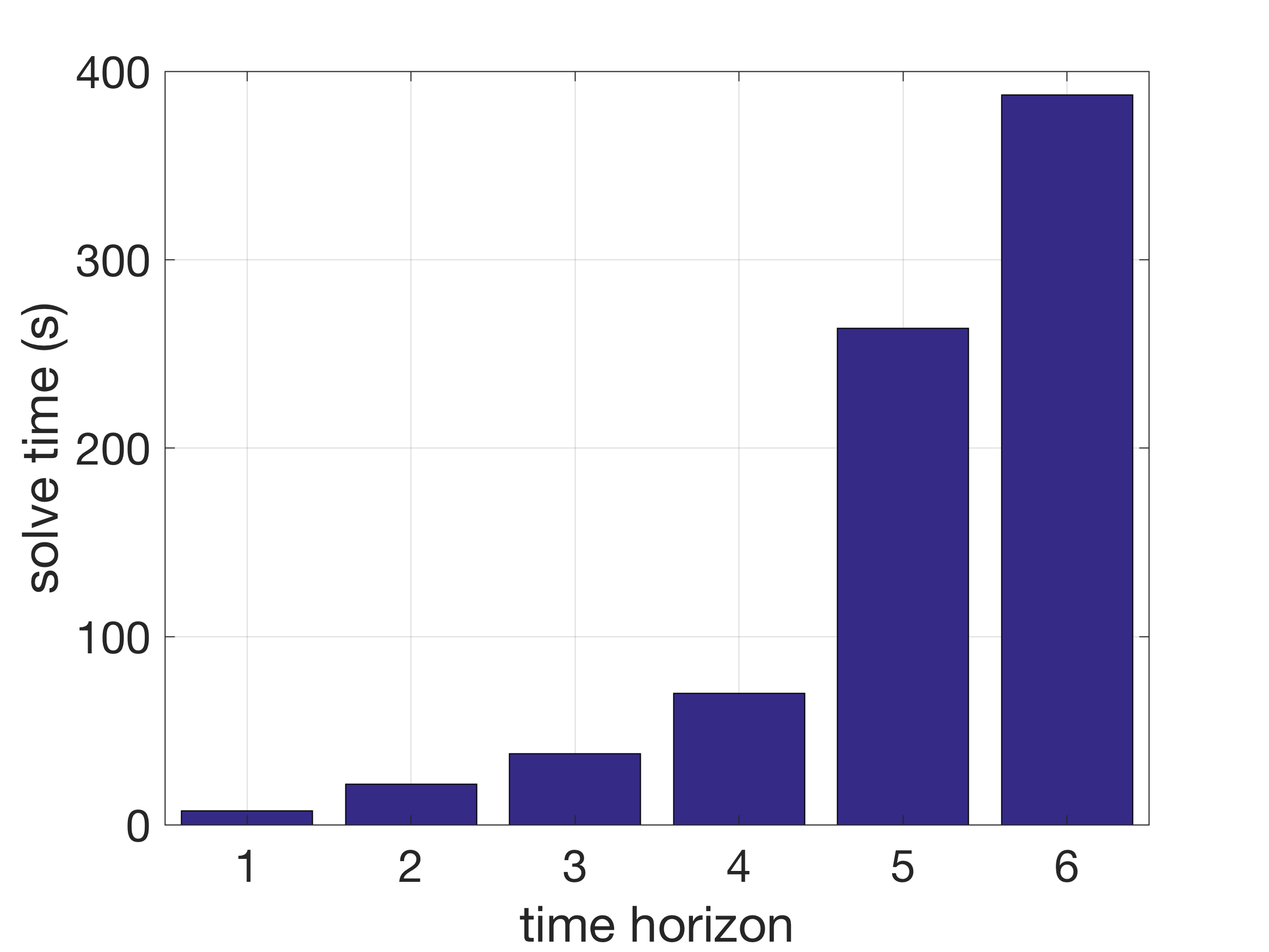}}
    \hfill
\subfloat[\label{fig:time_horizon}]{
\includegraphics[width=0.49\linewidth]{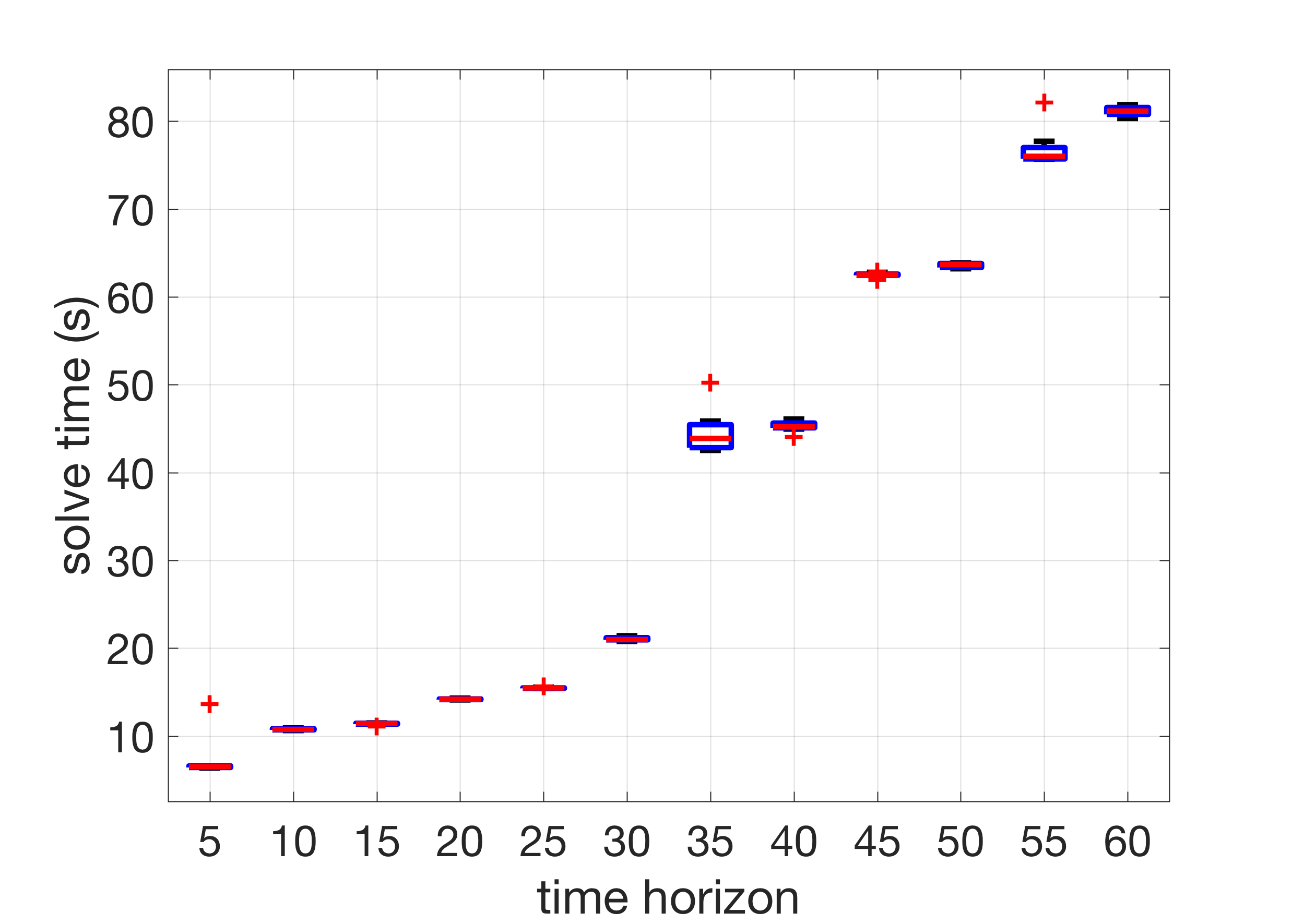}}
\caption{(a) Solve time for the full-scale NLP for different prediction horizons. For prediction horizons $>6$ time-steps, the solver did not converge. (b) SOCP solve time vs. length of prediction horizon.}
\end{figure}

\section{Conclusions and future work}\label{conclusion}
This paper presented a method for the optimal dispatch of batteries in an unbalanced three-phase distribution network. A second order cone relaxation is used to convert the non-convex power flow equation into a convex formulation that can be solved in polynomial time. As the solution obtained from the relaxed problem may not be feasible, an NLP is solved at each time-step  by fixing the real power set-points and decoupling the time-steps to obtain a physically realizable solution. Furthermore, the phenomenon of simultaneous charging and discharging of batteries is analyzed and sufficient conditions are provided for different objective functions that provably avoid this phenomenon to obtain a feasible solution. Simulation tests are conducted on IEEE-13 and IEEE-123 node distribution test feeders showing the feasibility of the obtained solution. The optimality gap is found to be within 2.1\%. The approach is computationally tractable and solves in less than 45~seconds, which ensures that enough time is available for realistic communication delays. This permits an implementation of the  optimization scheme on the minute timescale.

Future work will focus on reducing the optimality gap by using stronger relaxations of the power flow equations. We will also try and provide guarantees for a feasible solution to the decoupled NLP given an initialized SOCP solution. Extending the work to different grid objectives and including mechanical voltage control devices such as transformers and capacitor banks is another scope for improvement. Providing bounds on the gap between voltages obtained from the SOCP solver and a power flow solution is also an avenue for future work. \added{Further analysis on the phenomenon of simultaneous charging and discharging is required as described in Section~\ref{comp_const}}

\appendices
\section{Proof of Theorem \ref{Th1}}\label{AppendixA}
\begin{proof}
Since the SOCP optimization problem is convex and Slater's condition holds trivially, the KKT optimality conditions are both necessary and sufficient. Thus, for the KKT conditions, let
\begin{itemize}
\item  $\mathcal{L}$ be the Lagrangian.
\item $\lambda_{\text{p}}\in \mathbb{R}$ be the Lagrange multiplier for~\eqref{eq:P1_node_real_balance}.
\item $\lambda_{\text{s}}\in \mathbb{R}_+$ be the Lagrange multiplier for inequality~\eqref{eq:P1_battery_inv_limit}.
\item  $\underline{\lambda_{\text{d}}}, \overline{\lambda_{\text{d}}} \in \mathbb{R}_+$ be Lagrange multipliers associated with the lower bound and upper bound of  inequality~\eqref{eq:P1_Pd_limit}, respectively.
\item $\underline{\lambda_{\text{c}}}, \overline{\lambda_{\text{c}}} \in \mathbb{R}_+$ be Lagrange multipliers for the lower and upper bounds of inequality~\eqref{eq:P1_Pc_limit}, respectively.
\end{itemize}
Note that $P^{\text{c}}$ and $P^{\text{d}}$ are the charging and discharging rates for the battery at node $n$, phase $\phi$ at time $t$ and represent primal variables and $\eta_{\text{c}}, \eta_{\text{d}} \in [0,1]$ are the charging and discharging efficiencies.

From the KKT optimality conditions, the following relation is obtained from the Lagrangian with respect to $P^{\text{c}}$, i.e., $\frac{\partial \mathcal{L}}{\partial P^{\text{c}}}\equiv 0$:
\begin{equation}\label{eq:Th1_KKTPc}
\frac{\partial f(x)}{\partial P^{\text{c}}}-\underline{\lambda_{\text{c}}}+\overline{\lambda_{\text{c}}}-\eta_{\text{c}}\Gamma(t) \Delta t
+\lambda_{\text{p}}-2\lambda_{\text{s}}(P^{\text{d}}-P^{\text{c}})=0.
\end{equation}
With respect to $P^{\text{d}}$, KKT conditions give $\frac{\partial \mathcal{L}}{\partial P^{\text{d}}}\equiv 0$:
\begin{multline}\label{eq:Th1_KKTPd}
\frac{\partial f(x)}{\partial P^{\text{d}}}+\alpha (\frac{1}{\eta_{\text{d}}}-\eta_{\text{c}})-\underline{\lambda_{\text{d}}}+\overline{\lambda_{\text{d}}}+\frac{\Gamma(t) \Delta t}{\eta_{\text{d}}}\\-\lambda_{\text{p}}+2\lambda_{\text{s}}(P^{\text{d}}-P^{\text{c}})=0.
\end{multline}
Adding~\eqref{eq:Th1_KKTPc} and~\eqref{eq:Th1_KKTPd} gives:
\begin{equation}\label{eq:Th1_KKTcomb}
\underline{\lambda_{\text{c}}}+\underline{\lambda_{\text{d}}} = \overline{\lambda_{\text{c}}}+\overline{\lambda_{\text{d}}}+\left(\alpha+\Gamma(t) \Delta t\right)\left(\frac{1}{\eta_{\text{d}}}-\eta_{\text{c}}\right)+\frac{\partial f(x)}{\partial P^{\text{c}}}+\frac{\partial f(x)}{\partial P^{\text{d}}}
\end{equation}
In order to avoid SCD, the right hand side of equation~\eqref{eq:Th1_KKTcomb} needs to be strictly positive. In the above equation $\overline{\lambda_{\text{c}}}\geq 0$ and $\overline{\lambda_{\text{d}}}\geq 0$, \added{which changes \eqref{eq:Th1_KKTcomb} to the following inequality:
\begin{equation}\label{eq:Th1_cond2}
   \underline{\lambda_{\text{c}}}+\underline{\lambda_{\text{d}}} \geq \left(\alpha+\Gamma(t) \Delta t\right)\left(\frac{1}{\eta_{\text{d}}}-\eta_{\text{c}}\right)+\frac{\partial f(x)}{\partial P^{\text{c}}}+\frac{\partial f(x)}{\partial P^{\text{d}}} 
\end{equation}}

It can be seen that condition C1 is satisfied by the given objective functions, e.g. for objective $(P^{\text{d}}-P^{\text{c}}-P^{\text{ref}})^2$, $\frac{\partial f(x)}{\partial P^{\text{c}}}+\frac{\partial f(x)}{\partial P^{\text{d}}}=-2(P^{\text{d}}-P^{\text{c}}-P^{\text{ref}})+2(P^{\text{d}}-P^{\text{c}}-P^{\text{ref}})=0$. Based on these facts, \eqref{eq:Th1_cond2} gives:
\begin{equation}\label{eq:Th1_KKTcomb2}
\underline{\lambda_{\text{c}}}+\underline{\lambda_{\text{d}}}\ge \left(\alpha+\Gamma(t) \Delta t\right)\left(\frac{1}{\eta_{\text{d}}}-\eta_{\text{c}}\right)
\end{equation}
Based on the value of $\Gamma(t)$, the problem is divided into two cases:
\begin{enumerate}[I]
    \item $\Gamma(t)\ge0$:
    This is the case where the battery does not hit its upper capacity limit \added{which makes $\Gamma(t)\ge 0$ and as a result}, the right hand side of \eqref{eq:Th1_KKTcomb2} is strictly positive due to condition C2. Hence, if efficiencies are non-unity, simultaneous charging and discharging is avoided in this case. If efficiencies are unity, SCD fictitious losses are zero, so it is always exact.
 
    \item $\Gamma(t)<0$: this implies that the battery must hit its upper limit of state of charge at least once over the prediction horizon. In this case, the battery may waste energy through SCD in order to lower its state of charge.\added{ Parameter $\alpha$ is  added  to  discourage  SCD  in the  battery. When  battery  is  at  its  lower  limit,  SCD  may occur at optimality to consume more power. The $\alpha$ term acts as a  penalty  to  discourage  SCD.} In this case, $\alpha$ would have to be chosen in such a way that condition C3 is satisfied. Hence, conditions C1, C2, and C3 represent sufficient conditions for avoiding SCD.
\end{enumerate}
\end{proof}

\section{Avoiding SCD when tracking a desired battery state of charge}\label{AppendixB}
For the objective function: $f(B_{n,T})=\left(B_{n,T}-B^{\text{d}}\right)^2$, where $B_{n,T}=B_{n,t}+\Delta t\sum_{\tau=t}^{T-1}\left(\eta_{\text{c},n}P^{\text{c}}_{n,\tau}-\frac{1}{\eta_{\text{d},n}} P^{\text{d}}_{n,\tau}\right )$, Corollary \ref{corollary} provides conditions for the relaxation to be exact. These conditions are more restrictive than the ones required in Theorem~\ref{Th1}.
\begin{corollary}\label{corollary}
For the objective function $f(B_{n,T})$, the relaxation is exact under the following conditions: 
\begin{enumerate}
    \item [A1:] $\lambda_{\text{p}} \ge0$.
    \item [A2:] $\alpha>0$
\end{enumerate}
\end{corollary}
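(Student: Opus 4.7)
The plan is to adapt the KKT-based argument of Theorem~\ref{Th1} to the terminal state-of-charge tracking objective $f(B_{n,T})=(B_{n,T}-B^{\text{d}})^2$. First I would substitute the SoC dynamics from~\eqref{eq:P1_battery_power_rel} to rewrite $B_{n,T}=B_{n,t_0}+\Delta t\sum_{\tau}(\eta_{\text{c},n}P^{\text{c}}_{n,\tau}-P^{\text{d}}_{n,\tau}/\eta_{\text{d},n})$, which makes the objective an explicit function of the primal variables and yields $\partial f/\partial P^{\text{c}}_{n,t}=2(B_{n,T}-B^{\text{d}})\eta_{\text{c},n}\Delta t$ and $\partial f/\partial P^{\text{d}}_{n,t}=-2(B_{n,T}-B^{\text{d}})\Delta t/\eta_{\text{d},n}$. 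Their sum $\partial f/\partial P^{\text{c}}+\partial f/\partial P^{\text{d}}=-2(B_{n,T}-B^{\text{d}})\Delta t(1/\eta_{\text{d},n}-\eta_{\text{c},n})$ has indeterminate sign, so Theorem~\ref{Th1}'s condition C1 fails in general and a different dual-side condition on $\lambda_{\text{p}}$ becomes necessary, which motivates A1.

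Next I would replay the stationarity derivation~\eqref{eq:Th1_KKTPc}--\eqref{eq:Th1_KKTPd} with the new partial derivatives substituted in, hypothesize SCD so that $P^{\text{c}},P^{\text{d}}>0$ and complementary slackness forces $\underline{\lambda_{\text{c}}}=\underline{\lambda_{\text{d}}}=0$, and search for a contradiction. Rather than adding the two equations as in Theorem~\ref{Th1} (which here eliminates $\lambda_{\text{p}}$ but leaves an unconstrained combination of $B_{n,T}$ and $\Gamma(t)$), I would form a weighted combination that cancels the SoC adjoint: scaling the $P^{\text{c}}$ equation by $1/\eta_{\text{d},n}$ and the $P^{\text{d}}$ equation by $\eta_{\text{c},n}$ and summing yields, after simplification, $(1/\eta_{\text{d},n}-\eta_{\text{c},n})[\lambda_{\text{p}}+\alpha\eta_{\text{c},n}-2\lambda_{\text{s}}(P^{\text{d}}-P^{\text{c}})]\le 0$. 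Because $1/\eta_{\text{d},n}-\eta_{\text{c},n}>0$ under non-unity efficiencies and A1 together with A2 give $\lambda_{\text{p}}+\alpha\eta_{\text{c},n}>0$, SCD is possible only when $\lambda_{\text{s}}(P^{\text{d}}-P^{\text{c}})$ is large enough to swallow this positive margin, i.e.\ the inverter limit is active and the battery is net-discharging.

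To close the remaining corner case of $\lambda_{\text{s}}>0$ with $P^{\text{d}}>P^{\text{c}}$, I would employ a primal perturbation: replace $(P^{\text{c}},P^{\text{d}})$ by $(P^{\text{c}}-\delta,P^{\text{d}}-\delta)$ for small $\delta>0$. The net injection $P^{\text{d}}-P^{\text{c}}$, the inverter constraint, and all network variables are unchanged; the terminal SoC shifts by $\delta\Delta t(1/\eta_{\text{d},n}-\eta_{\text{c},n})>0$; and the augmentation term in~\eqref{eq:aug_obj} decreases by $\alpha\delta(1/\eta_{\text{d},n}-\eta_{\text{c},n})$. Using A1 together with the individual $P^{\text{c}}$-stationarity to upper-bound $B_{n,T}-B^{\text{d}}$ by a quantity of order $\alpha$ at any candidate SCD optimum, the linear expansion of the augmented objective at $\delta=0$ becomes strictly negative, contradicting optimality and ruling out SCD in this regime as well.

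The main obstacle I anticipate is precisely the perturbation bookkeeping in this corner case: the tracking term $(B_{n,T}-B^{\text{d}})^2$ can either grow or shrink under the shift in $B_{n,T}$ depending on the sign of $B_{n,T}-B^{\text{d}}$, so a single perturbation direction does not automatically improve the objective. A careful split on whether $B_{n,T}$ lies above or below $B^{\text{d}}$ is required, and A1 has to do the heavy lifting of certifying that the tracking-term growth cannot exceed the $\alpha$-penalty savings guaranteed by A2. This is the technical crux that Theorem~\ref{Th1}'s proof sidesteps by invoking C1 directly, and is the reason Corollary~\ref{corollary} needs the stricter pair A1 and A2 instead of the more general C1--C3.
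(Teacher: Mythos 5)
Your central step coincides with the paper's. The paper combines the two stationarity conditions with weights $1/\eta_{\text{c}}$ (on the $P^{\text{c}}$ equation) and $\eta_{\text{d}}$ (on the $P^{\text{d}}$ equation); your weights $1/\eta_{\text{d}}$ and $\eta_{\text{c}}$ are the same combination up to the positive scalar $\eta_{\text{c}}/\eta_{\text{d}}$ --- any pair of weights in the ratio $1:\eta_{\text{c}}\eta_{\text{d}}$ cancels the SoC adjoint $\Gamma(t)$, and because $f(B_{n,T})$ depends on the primal variables only through $B_{n,T}$, its gradient terms cancel along with it. Your inequality $(1/\eta_{\text{d}}-\eta_{\text{c}})\left[\lambda_{\text{p}}+\alpha\eta_{\text{c}}-2\lambda_{\text{s}}(P^{\text{d}}-P^{\text{c}})\right]\le 0$ is the paper's~\eqref{eq:cor_KKTcomb} in disguise, and A1--A2 rule it out except when $\lambda_{\text{s}}>0$ and $P^{\text{d}}>P^{\text{c}}$. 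The paper handles the $\lambda_{\text{s}}$ term not by perturbation but by splitting on the sign of $P^{\text{c}}-P^{\text{d}}$ at the outset: when $P^{\text{c}}\ge P^{\text{d}}$ the term $2\lambda_{\text{s}}(1/\eta_{\text{c}}-\eta_{\text{d}})(P^{\text{c}}-P^{\text{d}})$ is nonnegative and only strengthens the conclusion $\eta_{\text{d}}\underline{\lambda_{\text{d}}}\ge\alpha(1-\eta_{\text{c}}\eta_{\text{d}})>0$, hence $P^{\text{d}}=0$; the opposite case is dispatched with ``a similar procedure.''

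The gap is in your third paragraph. The perturbation $(P^{\text{c}},P^{\text{d}})\mapsto(P^{\text{c}}-\delta,P^{\text{d}}-\delta)$ does leave the net injection and the inverter constraint~\eqref{eq:P1_battery_inv_limit} unchanged, but (i) it raises $B_{n,\tau}$ for all subsequent $\tau$ by $\delta\Delta t(1/\eta_{\text{d}}-\eta_{\text{c}})$, so feasibility with respect to the upper bound in~\eqref{eq:P1_SOC_limit} must be verified and cannot be assumed --- an active upper SoC bound is precisely the regime in which SCD is attractive; and (ii) the claimed order-$\alpha$ upper bound on $B_{n,T}-B^{\text{d}}$ does not follow from the $P^{\text{c}}$ stationarity alone, because that equation also contains the term $\eta_{\text{c}}\Gamma(t)\Delta t$, whose sign is not controlled by A1--A2, so the first-order change $\delta(1/\eta_{\text{d}}-\eta_{\text{c}})\left[2\Delta t(B_{n,T}-B^{\text{d}})-\alpha\right]$ need not be negative. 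That said, you have put your finger on a soft spot of the paper's own argument: in the case $P^{\text{c}}<P^{\text{d}}$ the sign of the $\lambda_{\text{s}}$ term flips and the paper's ``similar procedure'' inherits exactly the corner case you isolate, so closing it requires either an additional hypothesis (e.g., $\lambda_{\text{s}}=0$, or a bound relating $\lambda_{\text{s}}(P^{\text{d}}-P^{\text{c}})$ to $\lambda_{\text{p}}$ and $\alpha$) or a completed version of your perturbation idea; as sketched, it is a reasonable candidate but not yet a proof.
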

\begin{proof}
Let $P^{\text{c}} \ge P^{\text{d}}$, then using KKT conditions, $\underline{\lambda_{\text{c}}}=0$, $\overline{\lambda_{\text{c}}}\ge0$ and the following equation is obtained from the Lagrangian with respect to $P^{\text{c}}$:
\begin{align}\label{eq:cor_Pc1}
 \Gamma(t) \Delta t\ge\frac{1}{\eta_{\text{c}}}(\lambda_{\text{p}}+\frac{\partial f(B_{n,T})}{\partial P^{\text{c}}}-2\lambda_{\text{s}}(P^{\text{d}}-P^{\text{c}}))
\end{align}

Since $\overline{\lambda_{\text{d}}}\ge0$, with respect to $P^{\text{d}}$, the following KKT condition results:
\begin{align}\label{eq:cor_Pd1}
\Gamma(t) \Delta t\le \eta_{\text{d}}(\lambda_{\text{p}}
 -\alpha(\frac{1}{\eta_{\text{d}}}-\eta_{\text{c}})+\underline{\lambda_{\text{d}}}-\frac{\partial f(B_{n,T})}{\partial P^{\text{d}}}-2\lambda_{\text{s}}(P^{\text{d}}-P^{\text{c}}))
\end{align}
Comparing~\eqref{eq:cor_Pc1} and~\eqref{eq:cor_Pd1} gives:
\begin{multline}\label{eq:cor_KKTcomb}
\eta_{\text{d}}\underline{\lambda_{\text{d}}}\ge 
\alpha (1-\eta_{\text{d}}\eta_{\text{c}})+\lambda_{\text{p}}(\frac{1}{\eta_{\text{c}}}-\eta_{\text{d}})+2\lambda_{\text{s}}(\frac{1}{\eta_{\text{c}}}-\eta_{\text{d}})(P^{\text{c}}-P^{\text{d}})
\\+\eta_{\text{d}}\frac{\partial f(B_{n,T})}{\partial P^{\text{d}}}+\frac{1}{\eta_{\text{c}}}\frac{\partial f(B_{n,T})}{\partial P^{\text{c}}}
\end{multline}
Using conditions A1, A2 and the fact that  $\frac{\partial f(B_{n,T})}{\partial P^{\text{c}}}=2\eta_{\text{c}}(B_{n,T}-B^{\text{d}})$, $\frac{\partial f(B_{n,T})}{\partial P^{\text{d}}}=-\frac{2}{\eta_{\text{d}}}(B_{n,T}-B^{\text{d}})$ and $\lambda_{\text{s}}\ge0$ in \eqref{eq:cor_KKTcomb} gives $\underline{\lambda_{\text{d}}}>0$ and hence $P^{\text{d}}=0$, provided $\eta_{\text{c}},\eta_{\text{d}}<1$ and $P^{\text{c}}\ge P^{\text{d}}$. 
A similar procedure can be used to show that when $P^{\text{c}}<P^{\text{d}}$, then $P^{\text{c}}=0$. Hence, $P^{\text{d}}P^{\text{c}}\equiv0$ is enforced.
\end{proof}
\ifCLASSOPTIONcaptionsoff
  \newpage
\fi


%
\bibliographystyle{IEEEtran}
\small\bibliography{fix.bib}
\end{document}